\newtheorem{theorem}{Theorem}[section]
\newtheorem{Counter-example}[theorem]{Counter example}
\newtheorem{Claim}[theorem]{Claim}
\newtheorem{Proposition}[theorem]{Proposition}
\newtheorem*{theorem*}{Theorem}
\newcommand{\supp}{\text{supp}}
\newcommand\blfootnote[1]{%
  \begingroup
  \renewcommand\thefootnote{}\footnote{#1}%
  \addtocounter{footnote}{-1}%
  \endgroup
}
\providecommand{\customgenericname}{}
\newcommand{\newcustomtheorem}[2]{%
  \newenvironment{#1}[1]
  {%
   \renewcommand\customgenericname{#2}%
   \renewcommand\theinnercustomgeneric{##1}%
   \innercustomgeneric
  }
  {\endinnercustomgeneric}
}
\title{Actions of diagonal endomorphisms on conformally invariant measures on the 2-torus}
\author{Amir Algom }
\date{}
\begin{document}
\maketitle

\begin{abstract}
Let\blfootnote{Supported by ERC grant 306494 and ISF grant 1702/17.}  $\nu$ be a probability measure  that is  ergodic under the endomorphism $(\times p, \times p)$ of the torus $\mathbb{T}^2$, such that  $\dim \pi \mu < \dim \mu$ for some non-principal projection $\pi$. We show that, if both $m\neq n$ are independent of $p$,  the $(\times m, \times n)$ orbits of $\nu$ typical points will equidistribute towards the Lebesgue measure. If $m>p$ then typically the $(\times m, \times p)$ orbits  will equidistribute towards the product of the Lebesgue measure with the marginal of $\mu$ on the $y$-axis. We also prove results in the same spirit for certain self similar measures $\nu$. These are  higher dimensional analogues of results due (among others) to Host \cite{Host1995normal}, Lindenstrauss \cite{Elon2001host}, and Hochman-Shmerkin \cite{hochmanshmerkin2015}.   
\end{abstract}

\section{Introduction}
Let $p$ be an integer greater or equal to $2$. Let $T_p$ be the $p$-fold map of the unit interval,  $$T_p (x) = p\cdot x \mod 1.$$
We  say that an integer $m>1$ is independent of $p$ if $\frac{\log p}{\log m} \notin \mathbb{Q}$, and write $m \not \sim p$ to indicate that $m$ and $p$ are independent. The purpose of this paper is to study higher dimensional analogues of the following  Theorem.

\begin{customthm}{$\star$}\label{Theorem Host template}
Let $\mu$ be a $T_p$ invariant ergodic measure with positive entropy, and let  $m\not \sim p$. Then $\mu$ almost every $x$ is normal in base $m$, that is, the sequence $\lbrace T_m ^k x \rbrace_{k \in \mathbb{Z}_+}$ equidistributes for $\lambda$, the Lebesgue measure on $[0,1]$.
\end{customthm}

Theorem \ref{Theorem Host template} was originally proved around 1960 for some  specific Cantor-Lebesgue measures $\mu$ by Cassels  \cite{Cassels1960normal} and Schmidt \cite{Schmidt1960normal}.  This was later generalized by Feldman and Smorodinsky \cite{Feldman1992normal} to all non-degenerate Cantor-Lebesgue measures (in fact, weakly Bernoulli). In 1995 Host \cite{Host1995normal} proved Theorem \ref{Theorem Host template} under the assumption that $m$ and $p$ are co-prime. Host's Theorem can be shown to imply Rudolph's Theorem \cite{Rudolph1990dis} which, together with Johnson's work \cite{Johnson1992dis}, proved the positive entropy case of Furstenberg's $\times 2, \times 3$ Conjecture \cite{furstenberg1967disjointness}. In 2001, Lindenstrauss \cite{Elon2001host} proved Theorem \ref{Theorem Host template} under the  assumption that $p$ does not divide any power of $m$ (which is weaker than Host's assumption). Finally, in 2015 Hochman and Shmerkin \cite{hochmanshmerkin2015} proved Theorem \ref{Theorem Host template} in its present form, i.e. whenever $p\not \sim m$. See (\cite{hochmanshmerkin2015}, Section 1.1) for some more discussion on the background of Theorem \ref{Theorem Host template}. 

We next survey some known higher dimensional analogues of Theorem \ref{Theorem Host template}. Let $S,T$ be endomorphisms of $\mathbb{T}^d$, where $\mathbb{T}=\mathbb{R}/\mathbb{Z}$, and let $\nu$ be an $S$ invariant and ergodic probability measure with positive (lower) Hausdorff dimension. Recall that for a probability measure $\mu$  its (lower) Hausdorff dimension is defined  by
\begin{equation} \label{Eq Hausdorff dimension}
\dim \mu = \inf \lbrace \dim_H A:\quad \mu(A)>0 \rbrace 
\end{equation}
where $\dim_H A$ is the Hausdorff dimension of the set $A$ (see e.g. \cite{falconer1986geometry}). The statistical behavior of the orbits $\lbrace T^k x \rbrace_{k\in \mathbb{Z}_+}$ for $\nu$ typical $x$ was  studied by several authors, under various assumptions:
\begin{enumerate}
\item Meiri and Peres \cite{Meiri1998Peres} worked with two diagonal endomorphisms $S$ and $T$,  requiring that the corresponding diagonal entries $S_{i,i}$ and $T_{i,i}$ be larger than $1$ and co-prime. They proved that the projections of theses orbits onto some non-trivial sub-torus  equidistribute towards the Lebesgue measure (on this sub-torus).

\item  Host \cite{Host2000high} showed that these orbits will almost surely equidistribute towards the Lebesgue measure, under the following  assumptions: all eigenvalues of $S$ have modulus $>1$,  for every $k$ the characteristic polynomial of  $T^k$ is irreducible over $\mathbb{Q}$, and $\det (S)$ and $\det(T)$ are co-prime.

\item Recently, in \cite{algom2019simultaneous}, we proved a simultaneous version of Theorem \ref{Theorem Host template}. Namely, we  considered the case when \begin{equation*}
S = \begin{pmatrix}
p & 0 \\
0& p 
\end{pmatrix} = T_p \times T_p, \quad 
T = \begin{pmatrix}
m & 0 \\
0& p
\end{pmatrix} = T_m \times T_p
\end{equation*}
and the measure $\nu=\Delta \mu$, where $\Delta:\mathbb{T}\rightarrow \mathbb{T}^2$ is the map $\Delta(x)=(x,x)$, for a probability measure $\mu$  as in Theorem \ref{Theorem Host template}. We showed that if  $m\not \sim  p$ and $m>p$ then for  $\nu$ typical $(x,x)$ their $T$-orbits will equidistribute towards $\lambda \times \mu$. We also showed that if   $m>n>p$ and $n\not \sim p$, then for $\nu$ typical $(x,x)$ their $T_m \times T_n$ orbits will equidistribute towards $\lambda \times \lambda$. In fact, similar  results hold true when (affinely)  embedding $\mu$ into any line through $[0,1]^2$ not parallel to the major axes (\cite{algom2019simultaneous}, Theorem 6.1).

\end{enumerate}

Let $T$ be an endomorphism of $\mathbb{T}^2$. Following \cite{hochmanshmerkin2015}, we say that a measure $\mu \in \mathcal{P}([0,1]^2)$  is pointwise generic under $T$ for a  measure $\rho$ if  $\mu$ almost every $z$ equidistributes for $\rho$ under $T$, that is,
\begin{equation*}
\frac{1}{N} \sum_{k=0} ^{N-1} f(T^k z) \rightarrow \int f(x) d\rho(x),\quad \forall f\in C(\mathbb{T}^2),
\end{equation*}
where for a compact metric space $X$ we let $\mathcal{P}(X)$ be the space of Borel probability measures on $X$. Also, let $P_1(x,y)=x$ and $P_2(x,y)=y$ be the principal projections.

The main result of this paper is a generalization of our results from \cite{algom2019simultaneous} (stated in (3) above) to  $T_p \times T_p$ invariant measures, that are not necessarily supported on lines.

\begin{theorem} \label{Theorem inv}
Let $m>p\geq 2$ be integers such that $m\not \sim p$. Let $\mu\in \mathcal{P}([0,1]^2)$ be an ergodic $T_p \times T_p$ invariant measure satisfying the following condition:
\begin{equation} \label{Equation condition}
\text{ There exists a linear projection } \pi:\mathbb{R}^2 \rightarrow \mathbb{R} \text { such that } \dim \pi \mu < \dim \mu, \text{ and } \pi \not \in \lbrace P_1,P_2 \rbrace.
\end{equation}

Then:
\begin{enumerate}
\item The measure $\mu$ is pointwise generic for $\lambda \times P_2 \mu$ under the map $T_m \times T_p$.

\item If $n$ is another integer such that $n \not \sim p$ and $m>n>p$ then $\mu$ is pointwise generic for $\lambda \times \lambda$ under the map $T_m \times T_n$.
\end{enumerate}
\end{theorem}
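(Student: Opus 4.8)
The plan is to prove both statements by studying the weak-$*$ limit points of the empirical measures
\[
\theta_N^z \;=\; \frac{1}{N}\sum_{k=0}^{N-1}\delta_{(T_m^k x,\, T_p^k y)}, \qquad z=(x,y),
\]
and the analogous measures for $T_m\times T_n$ in part (2), and showing that for $\mu$-a.e.\ $z$ each limit point equals the claimed target. By Weyl's criterion this reduces to controlling, for every $(a,b)\in\mathbb{Z}^2$, the averages $\tfrac1N\sum_{k<N} e^{2\pi i(a\,m^k x + b\,p^k y)}$. Since $P_2$ intertwines $T_p\times T_p$ with $T_p$, the marginal $P_2\mu$ is ergodic under $T_p$, so when $a=0$ the Birkhoff theorem gives convergence to $\widehat{P_2\mu}(b)$ for $\mu$-a.e.\ $z$, which is exactly the Fourier coefficient of $\lambda\times P_2\mu$ at $(0,b)$. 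Everything thus reduces to showing that the oscillatory averages vanish whenever $a\neq 0$.

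The first use of hypothesis \eqref{Equation condition} is to secure positive dimension of the coordinate marginals. I would show that a non-principal $\pi$ with $\dim\pi\mu<\dim\mu$ forces $\dim P_1\mu>0$ and $\dim P_2\mu>0$: if, say, $\dim P_1\mu=0$, then $\mu$ is carried (in the dimension sense) by vertical fibres, so the entire dimension of $\mu$ lives in the $y$-direction and is preserved by any non-principal linear image, giving $\dim\pi\mu=\dim\mu$ and contradicting \eqref{Equation condition}. Granting this, the case $a\neq0,\ b=0$ is immediate from the one-dimensional result: $P_1\mu$ is a $T_p$-invariant circle measure of positive dimension, so Theorem \ref{Theorem Host template} (Hochman--Shmerkin \cite{hochmanshmerkin2015}) makes $P_1\mu$-a.e.\ $x$ normal in base $m$, i.e.\ $\tfrac1N\sum_{k<N}e^{2\pi i a m^k x}\to 0$.

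The heart of the matter is the genuinely two-dimensional case $a\neq 0,\ b\neq 0$, where the twisted sum $\tfrac1N\sum_{k<N} e^{2\pi i(a\,m^k x + b\,p^k y)}$ must still tend to $0$. Here I would pass to the two-dimensional CP-distribution of $\mu$, describing the conditional measures of $\mu$ on shrinking $p$-adic squares; $T_p\times T_p$-invariance and ergodicity make it well defined and ergodic under magnification. The role of $m\not\sim p$ is that magnifying the $x$-coordinate by $m^k$ samples this scenery at $\log$-scales that equidistribute irrationally against the $p$-adic filtration, so that the $x$-direction decouples from the $y$-direction --- which instead stays synchronized with the $p$-adic scenery and retains its $P_2\mu$ statistics. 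Establishing this asymptotic independence uniformly along $\mu$-typical orbits, and thereby killing the cross term $b\,p^k y$, is the principal obstacle; it is a disjointness phenomenon of the type underlying Host \cite{Host1995normal} and our treatment of the diagonal case in \cite{algom2019simultaneous}. Combined with the marginal computations, this shows every limit point of $\theta_N^z$ is the product $\lambda\times P_2\mu$, proving part (1).

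For part (2) the argument is more symmetric. With $m>n>p$ and $m\not\sim p$, $n\not\sim p$, both coordinates are now driven by maps multiplicatively independent of $p$: using $\dim P_1\mu>0$ and $\dim P_2\mu>0$ from \eqref{Equation condition}, Theorem \ref{Theorem Host template} applied to $P_1\mu$ under $T_m$ and to $P_2\mu$ under $T_n$ forces both pure terms ($b=0$ and $a=0$) to vanish, while the twisted terms $a\neq0\neq b$ vanish by the same CP-chain decoupling argument, now symmetric in the two coordinates since neither is $p$-synchronized. Hence every limit point is $\lambda\times\lambda$, as claimed.
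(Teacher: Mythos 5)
Your Weyl-criterion reduction and your treatment of the marginal terms are essentially sound (the claim that \eqref{Equation condition} forces $\dim P_1\mu>0$ and $\dim P_2\mu>0$ does hold, via Furstenberg's dimension conservation for $T_p\times T_p$-invariant measures, much as in the paper's footnote). But the case $a\neq 0$, $b\neq 0$ --- which you yourself call ``the principal obstacle'' --- is not an obstacle you then overcome: it \emph{is} the theorem. Showing $\frac1N\sum_{k<N}e^{2\pi i(am^kx+bp^ky)}\to 0$ for $\mu$-a.e.\ $(x,y)$ is exactly equivalent to the product structure of the limit points, so asserting that ``the $x$-direction decouples from the $y$-direction'' restates the conclusion rather than proving it, and there is no off-the-shelf disjointness result to cite here; in the diagonal case this decoupling took an entire paper (\cite{algom2019simultaneous}). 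The paper's actual proof proceeds along a different route that your proposal never touches: it disintegrates $\mu$ over the fibers of the projection $\pi$ from \eqref{Equation condition}, shows (Claims \ref{Claim dis} and \ref{Claim proj inv}) that the conditional measures $\mu_{\pi^{-1}(x)}$ --- supported on lines not parallel to the axes --- admit a further disintegration into measures generating a common non-trivial EFD $P_{\pi^{-1}(0)}$, applies Theorem \ref{Theorem lines} to those fiber measures, and integrates via $\mu=\int\mu_{\pi^{-1}(x)}\,d\pi\mu(x)$. The hypothesis \eqref{Equation condition} is used precisely to make the fiber measures positive-dimensional, hence their EFD non-trivial; in your scheme it is used only for the marginals, which cannot suffice.

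There is a second, independent gap: your sketched mechanism for the decoupling --- magnification by $m^k$ ``samples the scenery at log-scales that equidistribute irrationally against the $p$-adic filtration'' --- is not valid unconditionally. Making it rigorous requires that the pure point spectrum of the relevant generated fractal distribution contain no non-zero integer multiple of $\frac{1}{\log m}$. The assumption $m\not\sim p$ only rules out the eigenvalues in $\mathbb{Z}\cdot\frac{1}{\log p}$ coming from the $p$-adic suspension structure; it does not preclude additional eigenvalues at $\frac{k}{\log m}$. The paper must devote a separate and substantially harder case to exactly this possibility (``Proof of Theorem \ref{Theorem inv} without a spectral assumption''), passing to the ergodic decomposition of $Q=P_{\pi^{-1}(0)}$ under $S_{\log m}$ and invoking convolution-dimension results (\cite{Elon1999conv}, together with Claims 3.4 and 4.1 of \cite{algom2019simultaneous}) to recover the product structure. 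Your proposal is silent on this, so even granting a rigorous version of the equidistribution-of-scales heuristic, the argument would remain incomplete.
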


Note that if an ergodic $T_p \times T_p$ invariant  measure $\mu$ satisfies at least one of the following conditions, then it also satisfies  \eqref{Equation condition}:
\begin{enumerate}
\item $\dim \mu >1$.

\item $\dim \mu >0$ and $\mu$ is supported on a line not parallel to either one of the major axes.

\item $\mu$ is the distribution of a random sum $\sum_{k=1} ^\infty \frac{X_k}{p^k}$ where the $X_k$'s are non-degenerate random variables that form an IID sequence and take values in $\lbrace0,...,p-1\rbrace\times \lbrace 0,...,p-1\rbrace$, and $\mu$   is not supported on a line  parallel to either one of the major axes.  
\end{enumerate}
We leave the (standard) proof to the interested reader. From the second example  above we see that the main result of \cite{algom2019simultaneous} follows as a special case of Theorem \ref{Theorem inv}. It is an interesting question to determine weather or not every ergodic $T_p \times T_p$ invariant measure with positive dimension admits a  projection $\pi:\mathbb{R}^2 \rightarrow \mathbb{R}$  such that  $\dim \pi \mu < \dim \mu$  and  $\pi \not \in \lbrace P_1,P_2 \rbrace$.   We remark that for every $p\geq 2$ one may construct a  $T_{p^2} \times T_p$ invariant measure  as in the third example above, that does not admit such a projection. For instance, consider the measure $\mu$ that is the distribution of the random sum $\sum_{k=1} ^\infty (\frac{X_k}{4^k}, \frac{Y_k}{2^k})$, where the pairs $(X_k,Y_k)$ form an IID sequence with
\begin{equation*}
\mathbb{P}( (X_1,Y_1) = (0,0) ) = \mathbb{P}( (X_1,Y_1) = (1,1) ) = \frac{1}{2}
\end{equation*}
Then $\min \lbrace \dim P_1 \mu, \dim P_2 \mu \rbrace =\frac{1}{2}$ so both $P_1 \mu$ and $P_2 \mu$ have no atoms, $\mu$ is $T_{4} \times T_2$ invariant, $\dim \mu =1$ (see either \cite{bedford1984crinkly} or \cite{mcmullen1984hausdorff}), and $\dim \pi \mu =1$ for every $\pi \neq P_1$.  The last assertion may be deduced from the results in \cite{Fraser2015Ferguson}.

Next we discuss an analogue of Theorem \ref{Theorem inv} for self similar measures: consider a self similar IFS (Iterated Function System) on $\mathbb{R}^2$ of the form
\begin{equation} \label{Eq ss IFS}
\Phi = \lbrace \phi_k(x)=r_k \cdot O_k\cdot x+t_k \rbrace_{k=1} ^l
\end{equation}
where $0<r_k<1$, $O_k$ are $2\times 2$  rotation matrices for every $k$, and $t_k\in \mathbb{R}^2$. We shall always assume $\Phi$ satisfies the strong separation  condition, which we abbreviate by SSC (see Section \ref{Section dimension} for its definition). Recall that a self similar measure is a probability measure of the form
\begin{equation} \label{Eq ss mea}
\mu = \sum_{k=1} ^l p_k \cdot \phi_k \mu
\end{equation}
where $p=(p_1,...,p_l)$ is a non-trivial probability vector (its existence and uniqueness are due to Hutchinson \cite{hutchinson1981fractals}). We will always assume that all the entries of $p$ are positive. Also, let $D\Phi := \lbrace O_k \rbrace_{k=1} ^l \subset SO(\mathbb{R}^2)$ be the finite set of rotation matrices associated with $\Phi$, and define
\begin{equation*}
k_\Phi := \min \lbrace k\in \mathbb{N}:\quad  O_1 ^k = O_2 ^{k}, \text{ for all } O_1, O_2 \in D\Phi \rbrace, 
\end{equation*}
where we let $\min \emptyset = \infty$. If $k_\Phi<\infty$, we define the angle
\begin{equation*}
\gamma_\Phi := \text{ The unique } \gamma \in [0,2\pi) \text{ such that for any } O\in D\Phi, O^{k_\Phi} \text{ is the rotation by } \gamma.
\end{equation*}

Finally, we define $G_\Phi$ to be the group generated by the orthogonal parts of the similarities in $\Phi$, that is, $G_\Phi = < D\Phi>$ is a subgroup of $SO(\mathbb{R}^2)$.

\begin{theorem} \label{Theorem ss}
Let $m>n>1$ be integers and $0<r<1$ be such that both $m,n\not \sim r$.  Let $\mu \in \mathcal{P}([0,1]^2)$ be a self similar measure with respect to an IFS $\Phi$ such that:
\begin{enumerate}
\item $\Phi$ satisfies the SSC, has a uniform contraction ratio $r$, $\supp(\mu)$ does not lie on a vertical or horizontal line, and $G_\Phi \leq SO(\mathbb{R}^2)$.

\item Either $|G_\Phi|<\infty$ or $\dim \mu >1$.

\item If $k_\Phi<\infty$ then for every integer $q\neq 0$ both $\frac{q}{\log m},\frac{q}{\log n} \not \not \in  \mathbb{Z}  \cdot  \frac{\gamma_\Phi}{\log r \cdot k_\Phi}$.

\end{enumerate}
Then $\mu$ is pointwise generic for $\lambda \times \lambda$ under the map $T_m \times T_n$.
\end{theorem}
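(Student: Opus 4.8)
My plan is to reduce the statement to the Weyl equidistribution criterion and then supply the required almost-sure cancellation through the scenery/entropy machinery of Hochman--Shmerkin \cite{hochmanshmerkin2015} rather than through Fourier decay, which a self-similar measure need not possess. By density of trigonometric polynomials, $\mu$ is pointwise generic for $\lambda\times\lambda$ under $T_m\times T_n$ as soon as, for every nonzero $(a,b)\in\mathbb{Z}^2$, the averages $\tfrac1N\sum_{k<N} e^{2\pi i(a\,m^k x + b\,n^k y)}$ tend to $0$ for $\mu$-a.e. $(x,y)$. I would obtain the almost-sure statement from the second-moment (Davenport--Erd\H{o}s--LeVeque) criterion, i.e. by bounding the variances $\int\big|\tfrac1N\sum_{k<N} e^{2\pi i(a\,m^k x + b\,n^k y)}\big|^2\,d\mu$; the whole difficulty is then to produce these bounds without Fourier information on $\mu$, which is exactly what the fractal/CP-chain formalism is designed to do.

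It is natural to split the frequencies $(a,b)$ into the \emph{marginal} case ($a=0$ or $b=0$) and the \emph{joint} case ($a,b\neq0$). In the marginal case the statement reduces to normality of the projection $P_1\mu$ in base $m$ (resp. $P_2\mu$ in base $n$), which I would prove by the one-dimensional scenery argument. Here the self-similar structure is used decisively: since $\Phi$ has the uniform contraction ratio $r$ and is conformal (homothety composed with a rotation $O_k$), the SSC-coding realizes the magnification dynamics of $\mu$ as a suspension over the Bernoulli shift on the symbol space with the \emph{constant} roof $\log(1/r)$, twisted by the rotations in $G_\Phi$. The hypotheses $m\not\sim r$ and $n\not\sim r$ now enter as a non-resonance: the base-$m$ (resp. base-$n$) magnification samples the scaling direction of this flow along an arithmetic progression of step $\log m$ (resp. $\log n$) which, by irrationality of $\log m/\log(1/r)$, equidistributes modulo the roof $\log(1/r)$; consequently the base-$m$ and base-$n$ sceneries both realize the full fractal distribution generated by $\mu$, and normality of the axis projections follows. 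When $k_\Phi<\infty$ the suspension carries in addition a rotation by $\gamma_\Phi$ every $k_\Phi$ symbols, and condition (3) is precisely the non-resonance that prevents this rotational part from locking onto a finite orbit of frequencies.

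The heart of the matter is the joint case $a,b\neq0$, and this is where $m\neq n$ and condition (2) are used. The map $T_m\times T_n$ magnifies the $x$- and $y$-axes at the distinct rates $m^k$ and $n^k$, so the relative distortion $(m/n)^k\to\infty$: in symbolic terms the $x$-coordinate is resolved to a depth exceeding that of the $y$-coordinate by $\approx k\,\log(m/n)/\log(1/r)$ symbols. I would argue that this unbounded anisotropy decorrelates the two exponential factors on average, provided the one-dimensional projections of $\mu$ onto the directions that appear in the (rotating) local frame are full-dimensional. This is exactly what condition (2) guarantees through the Hochman--Shmerkin projection theorem: if $|G_\Phi|=\infty$ then $G_\Phi$ is dense in $SO(\mathbb{R}^2)$ and $\dim P_\theta\mu=\min\{\dim\mu,1\}$ for \emph{every} $\theta$, which together with $\dim\mu>1$ makes all these projections one-dimensional; if instead $|G_\Phi|<\infty$ only finitely many frame directions occur, and condition (3) keeps each of them non-resonant with the $\log m$ and $\log n$ sampling, so the finitely many projections can be handled one at a time.

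The step I expect to be the main obstacle is the joint estimate. Two features must be controlled simultaneously: the local orthonormal frame rotates as one magnifies, so ``projection to the $x$-axis'' becomes projection to a moving direction --- this is what forces the appeal to a projection theorem valid in all directions rather than only the two principal ones --- and the two coordinates, although issuing from a single conformal system, must be decoupled, which is possible only because $m\neq n$ renders the magnification genuinely anisotropic. Converting this decorrelation into variance bounds that are summable in the sense of the Davenport--Erd\H{o}s--LeVeque criterion, which then yields the almost-sure statement, completes the argument.
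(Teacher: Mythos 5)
You have correctly identified where the hypotheses ought to enter (condition (3) as a non-resonance, condition (2) via the projection theorem), but the backbone of your argument --- Weyl's criterion combined with the Davenport--Erd\H{o}s--LeVeque second-moment test --- cannot be completed with the tools you invoke, and this is a genuine gap rather than a technical omission. The DEL criterion requires summable bounds on variances of the form $\int\bigl|\frac{1}{N}\sum_{k<N}e^{2\pi i(am^kx+bn^ky)}\bigr|^2d\mu$, and expanding the square turns these into averages of Fourier coefficients $\widehat{\mu}\bigl(a(m^k-m^j),\,b(n^k-n^j)\bigr)$. Controlling such quantities is exactly the Fourier-analytic information that a self-similar measure need not possess (as you note yourself), and the scenery/EFD formalism provides no access to it: what that machinery produces are statements about the distribution of rescaled local pieces of $\mu$ --- dimensions of projections, dimensions of convolutions, spectral properties of the scaling flow --- never bounds on individual Fourier coefficients of $\mu$. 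This is precisely why the equidistribution theorems of \cite{hochmanshmerkin2015} are not proved through exponential sums. Consequently your ``joint case'' ($a,b\neq0$), which you rightly call the heart of the matter, is not proved at all: ``unbounded anisotropy decorrelates the two exponential factors on average'' is a heuristic, and nothing in your outline converts it into a summable variance bound. A secondary point: your use of condition (2) is partly inverted. When $|G_\Phi|<\infty$ and $\dim\mu\leq1$, what is needed is a non-principal projection along which the dimension \emph{drops} (Theorem \ref{Theorem farkas}), so that by dimension conservation the fibers of that projection carry positive dimension; full-dimensional projections in all directions are exactly what one cannot have there.

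For comparison, the paper's proof never touches exponential sums; the key step is a disintegration. One produces a non-principal orthogonal projection $\pi$ such that the conditional measures $\mu_{\pi^{-1}(x)}$ have positive dimension --- via Theorem \ref{Theorem farkas} and dimension conservation when $|G_\Phi|<\infty$, and via the projection theorem together with $\dim\mu>1$ when $|G_\Phi|=\infty$ --- and then shows (Claim \ref{Claim dis for ss}) that these conditionals, which are measures on lines not parallel to the axes, generate a non-trivial EFD $Q$ that is a factor of the EFD $P$ generated by $\mu$. Proposition \ref{Claim spec ss} computes $\Sigma(P,S)\subseteq\mathbb{Z}\cdot\frac{1}{\log r}\cup\mathbb{Z}\cdot\frac{\gamma_\Phi}{k_\Phi\log r}$, so the hypotheses $m,n\not\sim r$ and (3) guarantee that $\Sigma(Q,S)$ contains no nonzero integer multiple of $\frac{1}{\log m}$ or $\frac{1}{\log n}$, and the main result of \cite{hochmanshmerkin2015} makes $P_2\mu_{\pi^{-1}(x)}$ pointwise generic for $\lambda$ under $T_n$. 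With these three properties in hand, Theorem \ref{Theorem lines} (the $T_m\times T_n$ result for measures supported on lines, imported from \cite{algom2019simultaneous}) applies to almost every fiber, and the theorem follows by integrating over $\pi\mu$. If you want to repair your outline, the piece to replace is the DEL criterion: joint equidistribution must come from a fiberwise application of such a line-measure theorem over a well-chosen disintegration, not from second-moment bounds.
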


Theorem \ref{Theorem ss} is a higher dimensional analogue of a Theorem of Hochman and Shmerkin (\cite{hochmanshmerkin2015}, Theorem 1.4) about similar results for a wide class of one dimensional IFS's.  Our assumptions about the independence of the contraction ratio $r$ from $m,n$ are analogues to theirs. Our other assumptions  (e.g. about the relation of $m,n$ to $\gamma_\Phi$ and $k_0$, and that $D\Phi$ comprises only of rotations) arise as a by-product of our proof.

We dedicate the final part of this introduction to a brief outline of our method. The proof of  Theorem \ref{Theorem ss} relies on finding a projection $\pi:\mathbb{R}^2 \rightarrow \mathbb{R}, \pi \neq P_1,P_2,$ such that:
\begin{enumerate}
\item  The conditional measures $ \mu_{\pi^{-1} (x)}$ almost surely  generate a non-trivial  ergodic fractal distribution  (defined in Section \ref{Section EFD}).

\item The pure point spectrum of this distribution does  not contain non-zero integer multiples of either $\frac{1}{\log m}$ or $\frac{1}{\log n}$.
\end{enumerate}
Condition (3) of Theorem \ref{Theorem ss} arises from the computation of this pure point spectrum. Once such a projection is produced, we appeal to our previous results (\cite{algom2019simultaneous}, Theorem 6.2) about the $T_m \times T_n$ orbits of measures supported on \textit{lines} satisfying these conditions. Now, finding such a projection is hopeless when $|G_\Phi|=\infty$ if we remove the assumption $\dim \mu >1$. Indeed, if $\dim \mu \leq 1$, then by (\cite{hochman2009local}, Theorem 1.6) $\dim \pi \mu = \dim \mu$ for every such projection, so $ \dim \mu_{\pi^{-1} (x)}  =0$ almost surely (by e.g. \cite{algom2019simultaneous}, Lemma 2.2). Such measures are known to generate trivial ergodic fractal distributions \cite{hochman2010dynamics}.

The proof of Theorem \ref{Theorem inv} also relies on finding a suitable disintegration of $\mu$: First, we disintegrate $\mu$ according to the projection $\pi$ as in \eqref{Equation condition}.  Then we will show that typical conditional measures themselves admit a further disintegration such that almost surely the two properties listed above hold. Finally, we apply our previous results about the $T_m \times T_n$ orbits of measures supported on lines to typical conditional measures.

\textbf{Organization} In Section \ref{Section pre} we survey some basic definitions regarding dimension theory of measures and of their scaling sceneries. We also recall some of the results from \cite{algom2019simultaneous} that we shall apply here. We proceed to prove Theorem \ref{Theorem ss}, and then Theorem \ref{Theorem inv}. 

\textbf{Acknowledgments} I am grateful to Mike Hochman, Zhiren Wang, and Federico Rodriguez Hertz for some useful remarks and suggestions.

\section{Preliminaries} \label{Section pre}

 \subsection{Dimensions of measures and of their projections} \label{Section dimension}
Let $\mu \in \mathcal{P}(\mathbb{R}^d)$. For every $x\in \supp(\mu)$  we define the   local (pointwise) dimension of $\mu$ at $x$  as
\begin{equation*}
\dim(\mu,x)=\liminf_{r\rightarrow 0} \frac{\log \mu (B(x,r))}{\log r}
\end{equation*}
where $B(x,r)$ denotes the closed ball or radius $r$ about $x$. The Hausdorff dimension of $\mu$, which we defined in \eqref{Eq Hausdorff dimension}, is equal to
\begin{equation} \label{Eq lower dim}
\dim \mu = \text{ess-inf}_{x\sim \mu} \dim(\mu,x),
\end{equation} 
see e.g. \cite{falconer1997techniques}. If $\dim (\mu,x)$ exists as a limit at almost every point, and is constant almost surely, we shall say that the measure $\mu$ is exact dimensional. 

Now, let $\mu \in \mathcal{P}(\mathbb{R}^2)$ be a probability measure, and let $\pi:\mathbb{R}^2\rightarrow \mathbb{R}$ be a linear projection (a re-parameterization of a projection onto a line). It is a classical question in geometric measure theory to study the dimension of the projected measure $\pi \mu$. Let us concentrate our attention on self similar measures $\mu$ (as in \eqref{Eq ss mea}) with respect to an IFS $\Phi$ (as in \eqref{Eq ss IFS}, where we allow the $O_k$'s to be reflections as well).  Let $X$ be the attractor of $\Phi$, that is, $X\neq \emptyset$ is the unique compact set such that
$$\bigcup_{k=1} ^l \phi_k (X) =X $$
Then $\Phi$ satisfies the SSC (strong separation condition) if the union above is disjoint.  As we have mentioned before, assuming $\Phi$ has strong separation and $|G_\Phi|=\infty$,  Hochman and Shmerkin (\cite{hochman2009local}, Theorem 1.6)  proved that $\dim \pi \mu =\min \lbrace \dim \mu, 1 \rbrace$ for every  projection $\pi$ (later the separation condition was relaxed by Falconer and Jin \cite{Falconer2014Jin}). The situation when $|G_\Phi|<\infty$ is different, as observed by Farkas \cite{Farkas2016proj}:
\begin{theorem} \cite{Farkas2016proj} \label{Theorem farkas}
Let $\mu \in \mathcal{P}(\mathbb{R}^2)$ be  a non degenerate self similar measure with respect to  an IFS $\Phi$ such that $|G_\Phi|<\infty$, and such that $\supp(\mu)$ does not lie on a vertical or horizontal line. Then there exists a non principal projection $\pi \neq P_1,P_2$ such that
\begin{equation*}
\dim \pi \mu < \dim \mu.
\end{equation*}  
\end{theorem}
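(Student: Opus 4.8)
The plan is to exhibit a single projection in which the measure acquires an \emph{exact overlap} that is invisible in the plane (thanks to the SSC) but collapses after projecting, and then to convert this overlap into a strict loss of dimension using the exact--dimensionality of self-similar measures on the line.

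First I would record how projections interact with $\Phi$. Writing $\pi_\theta(z)=\inner{z,u_\theta}$ with $u_\theta=(\cos\theta,\sin\theta)$, and letting $\phi_w=\phi_{w_1}\circ\cdots\circ\phi_{w_n}$ have linear part $r_w O_w$ and translation part $s_w$, a direct computation gives $\pi_\theta\circ\phi_w(z)=r_w\,\pi_{\theta-\arg O_w}(z)+\pi_\theta(s_w)$. Projecting the self-similar identity $\mu=\sum_{|w|=n}p_w\,\phi_w\mu$ therefore expresses $\pi_\theta\mu$ in terms of the \emph{finitely many} measures $\{\pi_{\theta+\beta}\mu:\beta\in\arg G_\Phi\}$, where I crucially use $|G_\Phi|<\infty$. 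Thus the orbit $\{\pi_{\theta+\beta}\mu\}_\beta$ forms a strongly connected graph-directed self-similar system on $\R$ (nodes indexed by $\arg G_\Phi$, edges by the letters), and in particular all of these measures are exact dimensional of a common value $\delta_\theta$, with $\delta_\theta\le\min\{\dim\mu,1\}$ since projections cannot increase dimension.

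Second, and this is the heart of the matter, I would construct the overlap. Because $G_\Phi\le SO(\R^2)$ is abelian, for any two letters $a\neq b$ the words $I=(a,b)$ and $J=(b,a)$ satisfy $O_I=O_aO_b=O_bO_a=O_J$ and $r_I=r_ar_b=r_J$, so $\pi_\theta\phi_I$ and $\pi_\theta\phi_J$ share the same linear part for every $\theta$; they coincide as maps in a direction $\theta_0$ exactly when $\pi_{\theta_0}(s_I-s_J)=0$, i.e.\ when $u_{\theta_0}\perp(s_I-s_J)$. It then remains to choose $a,b$ so that $s_I\neq s_J$ with $s_I-s_J$ off the two axes. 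If $s_I=s_J$ then $\phi_a,\phi_b$ commute, which for similarities forces a common fixed point; were this to hold for all pairs the attractor would be a single point, contradicting non-degeneracy. Likewise, if every available difference $s_I-s_J$ were axis-parallel the translations would be confined to a coordinate line and $\supp(\mu)$ would lie on a vertical or horizontal line, which is excluded. Hence some pair yields $s_I-s_J$ off the axes; letting $\theta_0$ be the resulting direction and $\pi=\pi_{\theta_0}$ we obtain $\pi\neq P_1,P_2$ together with $\pi\phi_I=\pi\phi_J$ while $\phi_I\neq\phi_J$ (distinct words give distinct planar maps under the SSC).

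Finally I would turn this coincidence into a strict dimension drop. Passing to the block system of length-$2$ words, the relation $\pi\phi_I=\pi\phi_J$ identifies two distinct generators of the projected graph-directed system. Merging two atoms of the per-step distribution, a grouping estimate shows that the random-walk (Garsia) entropy $h_{RW}$ of the projected system is strictly smaller than the free value $H(p)$ per symbol; combined with the exact-dimensional formula $\delta_{\theta_0}=\min\{1,h_{RW}/\chi\}$, where $\chi=-\sum_k p_k\log r_k$, and with $\dim\mu=H(p)/\chi$ under the SSC, this gives $\dim\pi\mu=\delta_{\theta_0}<\dim\mu$ in all cases (if $\dim\mu>1$ one simply notes $\delta_{\theta_0}\le 1<\dim\mu$). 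I expect this last step to be the main obstacle: one must verify rigorously that a single algebraic coincidence is \emph{effective}, genuinely lowering the asymptotic entropy of the (graph-directed) random walk rather than being absorbed into pre-existing relations, and one must justify the exact-dimensional formula in the graph-directed setting. The combinatorial core is the grouping inequality; the remaining bookkeeping --- strong connectivity of the orbit graph, so that $\delta_\theta$ is well defined and common across the orbit, and the reduction of the degenerate commuting configurations to the excluded cases --- is routine given the hypotheses.
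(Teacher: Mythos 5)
Your overall strategy --- manufacture an exact overlap in a projected graph-directed system via the commutation trick, then convert it into an entropy deficit --- is the right mechanism (it is essentially what underlies Farkas's argument; note the paper itself gives no proof of this statement, it cites Farkas and merely remarks that his argument for sets adapts to measures). But as written your proof has a genuine gap exactly at the step that must produce a \emph{non-principal} projection. You take $I=(a,b)$, $J=(b,a)$ for \emph{letters} $a\neq b$ and claim that if every difference $s_I-s_J$ is axis-parallel then the translations, hence $\supp(\mu)$, lie on a vertical or horizontal line. That implication is false, because $s_I-s_J$ is not a difference of translations or of fixed points: writing $x_a,x_b$ for the fixed points of $\phi_a,\phi_b$, one computes
\begin{equation*}
s_I-s_J=(\id-r_aO_a)(\id-r_bO_b)(x_a-x_b),
\end{equation*}
and the invertible factors $(\id-r_kO_k)$ rotate directions. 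Concretely, in complex notation take $\phi_1(z)=riz$ and $\phi_2(z)=riz+(1+ri)$ with $r$ small. The SSC holds, $G_\Phi$ is cyclic of order $4$ (rotations only, so this sits inside your own restricted setting), and the attractor lies on \emph{no} line whatsoever (if $K\subseteq\ell$ then $\phi_1(K)\subseteq\ell\cap ri\ell$, a point, forcing $K$ to be a point). Yet the unique available difference is $s_{12}-s_{21}=-(1-ri)(1+ri)=-(1+r^2)$, which is horizontal, so your recipe outputs only $P_2$ --- precisely the projection the theorem excludes. Repairing this requires running the commutator construction over pairs of \emph{words} $u,v$ (whose fixed points are dense in the attractor and whose twisting factors $(\id-r_uO_u)$ vary), together with an actual argument that the resulting set of overlap directions cannot be contained in the two principal ones; in the example above the word pair $(11,2)$ does the job, but your proposal contains no such argument, and the one-line justification it offers is the false implication above.

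Two further issues, less central but real. First, in the setting where the paper states this theorem the $O_k$ are allowed to be reflections (Section 2.1 says so explicitly), so $G_\Phi$ need not be abelian and $O_aO_b=O_bO_a$ can fail; one must first pass to words whose orthogonal parts are rotations. Second, you invoke the equality $\dim\pi\mu=\min\{1,h_{RW}/\chi\}$ for a graph-directed system with overlaps as a known ``exact-dimensional formula''; no such equality is available at this level of generality (it is of the same nature as the exact-overlaps conjecture). Fortunately you only need the inequality $\dim\pi\mu\le h_{RW}/\chi$, which is the standard direction: combined with the strict entropy drop $h_{RW}<H(p)$ --- which itself still needs the positive-frequency ``activation'' argument you flag, since the relation $\pi\phi_I=\pi\phi_J$ is only usable when the rotation random walk returns the direction to $\theta_0$ modulo $\pi$ --- and with $\dim\mu=H(p)/\chi$ under the SSC, this yields $\dim\pi\mu<\dim\mu$ when $\dim\mu\le1$, the case $\dim\mu>1$ being trivial. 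So the last step is repairable; the non-principality step is the one that genuinely fails.
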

We remark that while the proof in \cite{Farkas2016proj} deals with  the dimension of projections of  self similar sets, it is not hard to adapt the same proof to work for self similar measures.

\subsection{On the scaling sceneries of measures and ergodic fractal distributions} \label{Section EFD}
In this Section we recall the definition of the scaling scenery of a measure, and related notions. The ideas we introduce here have a long and interesting history, and  we refer the reader to either (\cite{hochmanshmerkin2015}, Section 1.2) or   (\cite{hochman2010dynamics}, Section 1) for some further discussions about them (and also for an exhaustive bibliography). We remark that we  follow the same notation as in \cite{hochmanshmerkin2015}.

Let
\begin{equation} \label{M square}
\mathcal{M}^{\square} = \lbrace \mu \in \mathcal{P}([-1,1]^2):\quad 0\in \supp(\mu) \rbrace.
\end{equation}
For $\mu \in \mathcal{M}^{\square}$ and $t\in \mathbb{R}_+$ we define the scaled measure $S_t \mu \in \mathcal{M}^{\square}$ by
\begin{equation*}
S_t \mu (E) = c \cdot \mu (e^{-t}E\cap [-1, 1]^2),\quad \text{where } c \text{ is a normalizing constant}.
\end{equation*}
For $x\in \supp (\mu)$ we similarly define the translated measure by
\begin{equation*}
\mu^x(E) = c' \cdot  \mu ( (E-x)\cap [-1,1]^2),\quad  \text{where } c' \text{ is a normalizing constant}.
\end{equation*}
The scaling flow is the Borel $\mathbb{R}^+$ flow $S=(S_t)_{t\geq0}$ acting on $\mathcal{M}^{\square} $. The scenery of $\mu$ at $x\in \supp(\mu)$ is the orbit of $\mu^x$ under $S$, that is, the one parameter family of measures $\mu_{x,t}:= S_t(\mu^x)$ for $t\geq0$. Thus, the scenery of the measure at some point $x$ is what one sees as one "zooms" into the measure.

Notice that $\mathcal{P}(\mathcal{M}^{\square} ) \subseteq \mathcal{P}(\mathcal{P}([-1,1]^2))$. As is standard in this context, we shall refer to elements of $\mathcal{P}(\mathcal{P}([-1,1]^2))$ as distributions, and to elements of $ \mathcal{P}(\mathbb{R}^2)$ as measures. A measure $\mu \in \mathcal{P}([0,1]^2)$ generates a distribution $P\in \mathcal{P}(\mathcal{P}([-1,1]^2))$ at $x\in \supp(\mu)$ if the scenery at $x$ equidistributes for $P$ in $\mathcal{P}(\mathcal{P}([-1,1]^2))$, i.e. if
\begin{equation*}
\lim_{T\rightarrow \infty} \frac{1}{T} \int_0 ^T f(\mu_{x,t}) dt = \int f(\nu) dP(\nu),\quad \text{ for all } f\in C( \mathcal{P}([-1,1]^2)).
\end{equation*}
and $\mu$ generates $P$ if it generates $P$ at $\mu$ almost every $x$. If $\mu$ generates $P$, then $P$ is supported on $\mathcal{M}^{\square}$ and is $S$-invariant (\cite{hochman2010dynamics}, Theorem 1.7). We say that $P$ is trivial if it is the distribution supported on $\delta_0 \in \mathcal{M}^{\square}$ - a fixed point of $S$.

The next result says that distributions $P\in \mathcal{P}(\mathcal{P}([0,1]^2))$ that are generated by a given measure $\mu$ have some additional invariance properties:
\begin{theorem}  \label{Theorem 4.7} (\cite{hochmanshmerkin2015}, Theorem 4.7) Suppose that $\mu$ generates an $S$-invariant distribution $P$. Then $P$ is supported on $\mathcal{M}^\square$ and satisfies the $S$-quasi-Palm property: for every Borel set $B\subseteq \mathcal{M}^\square$, $P(B)=1$ if and only if for every $t>0$, $P$ almost every measure $\eta$ satisfies that $\eta_{x,t}\in B$ for $\eta$ almost every $x$ such that $[x-e^{-t},x+e^{-t}]^2\subseteq [-1,1]^2$.  
\end{theorem}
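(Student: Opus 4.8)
The content to establish beyond what is already recorded (that $P$ is $S$-invariant and supported on $\mathcal{M}^\square$, by \cite{hochman2010dynamics}, Theorem 1.7) is the quasi-Palm property. The plan is to reduce it to a single integrated identity and then exploit crucially that $P$ is \emph{generated} by $\mu$, not merely $S$-invariant. For $t>0$ let $V_t=[-1+e^{-t},1-e^{-t}]^2$ be the set of admissible base points, i.e. those $x$ with $[x-e^{-t},x+e^{-t}]^2\subseteq[-1,1]^2$, and define the magnified distribution $P^{\ast t}\in\mathcal{P}(\mathcal{M}^\square)$ by
\[
\int f\,dP^{\ast t}=\int_{\mathcal{M}^\square}\frac{1}{\eta(V_t)}\int_{V_t} f(\eta_{x,t})\,d\eta(x)\,dP(\eta),\qquad f\in C(\mathcal{M}^\square).
\]
Unwinding the definitions, the asserted equivalence ``$P(B)=1$ iff for every $t$, $P$-a.e.\ $\eta$ has $\eta_{x,t}\in B$ for $\eta$-a.e.\ admissible $x$'' is exactly mutual absolute continuity of $P$ and $P^{\ast t}$ for each $t$; I will in fact aim for the stronger identity $P^{\ast t}=P$, which implies it (take $f=\mathbf 1_B$: if $P(B)=1$ then $P^{\ast t}(B)=1$, which says precisely that $\eta_{x,t}\in B$ for $\eta$-a.e.\ admissible $x$, $P$-almost surely; the converse direction is symmetric). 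So the whole theorem reduces to proving $P^{\ast t}=P$ for every $t>0$.

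The algebraic engine is a refocusing identity relating magnification of a scenery to continuation of the scenery from a displaced base point. A direct computation from the definitions of $S_t$ and $\mu^x$ shows that for $x\in\supp(\mu)$, $t\geq 0$, and an admissible point $y$ of $\mu_{x,t}$,
\[
S_s\big((\mu_{x,t})^y\big)=\mu_{w,\,t+s},\qquad w=x+e^{-t}y,
\]
up to the truncation to $[-1,1]^2$. This is where the admissibility restriction in the statement is used: it guarantees that the magnified window still sits inside the frame, so no mass is lost. In words, zooming into a scenery measure at a typical point is the same as running the original scenery of $\mu$ from a base point displaced by $O(e^{-t})$.

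Next I would compute $P^{\ast t}(f)$ using generation. Since $P$ is generated by $\mu$, for $\mu$-a.e.\ $x_0$ one has $\int g\,dP=\lim_{T}\frac1T\int_0^T g(\mu_{x_0,\tau})\,d\tau$ for every $g\in C(\mathcal{M}^\square)$; applying this to the (generically continuous) functional $g_t(\eta)=\frac{1}{\eta(V_t)}\int_{V_t} f(\eta_{x,t})\,d\eta(x)$ gives $P^{\ast t}(f)=\lim_T\frac1T\int_0^T g_t(\mu_{x_0,\tau})\,d\tau$. Substituting $\eta=\mu_{x_0,\tau}$ into the refocusing identity and changing variables $x\mapsto z=x_0+e^{-\tau}x$ (which pushes $\mu_{x_0,\tau}|_{V_t}$, renormalized, forward to $\mu$ restricted to a window $W_{x_0,\tau}$ of radius $\asymp e^{-\tau}$ about $x_0$) turns $g_t(\mu_{x_0,\tau})$ into the spatial average $\frac{1}{\mu(W_{x_0,\tau})}\int_{W_{x_0,\tau}} f(\mu_{z,\tau+t})\,d\mu(z)$. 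Hence the theorem reduces to showing that the space--time double average
\[
\frac1T\int_0^T \frac{1}{\mu(W_{x_0,\tau})}\int_{W_{x_0,\tau}} f(\mu_{z,\tau+t})\,d\mu(z)\,d\tau\;\longrightarrow\;\int f\,dP
\]
for $\mu$-a.e.\ $x_0$.

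This last convergence is the crux, and it is genuinely delicate. One cannot simply compare the inner spatial average with the diagonal value $f(\mu_{x_0,\tau+t})$: the window $W_{x_0,\tau}$ shrinks only at rate $e^{-\tau}$, whereas $\mu_{z,\tau+t}$ resolves $\mu$ at the finer scale $e^{-(\tau+t)}$, so points $z\in W_{x_0,\tau}$ may have completely different fine-scale sceneries and $z\mapsto f(\mu_{z,\tau+t})$ is far from equicontinuous on the window. The resolution must use that $\mu$-a.e.\ window point \emph{also} generates $P$, so that the spatial averaging reshuffles sceneries but preserves their limiting distribution. I expect to establish this either (i) by a space--time (Wiener-type) maximal and ergodic argument showing the difference between the double average and the diagonal time average $\frac1T\int_0^T f(\mu_{x_0,\tau+t})\,d\tau\to\int f\,dP$ is negligible in Ces\`aro mean, or, more robustly, (ii) by transferring to the discrete magnification process (the CP-chain, or micromeasure distribution) along an integer or $b$-adic scale sequence, where $P$ is realized as the stationary distribution of a Markov chain on measures obtained by zooming into the sub-cell containing a $\mu$-random point; in that model the Palm identity $P^{\ast t}=P$ is precisely the stationarity/Fubini structure of the chain, and one then passes back to the scaling flow. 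Throughout, the $[-1,1]^2$ truncations and the normalizations $1/\mu(W)$ and $1/\eta(V_t)$ must be carefully tracked, and the admissibility restriction in the statement is exactly what keeps these under control.
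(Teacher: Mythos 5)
The paper itself contains no proof of this statement: it is quoted verbatim from \cite{hochmanshmerkin2015} (Theorem 4.7 there), whose proof in turn rests on the CP-process machinery of \cite{hochman2010dynamics}. So your proposal must be judged on its own merits, and it has a genuine gap. After the (correct in spirit) refocusing computation you arrive at the space--time double average and say yourself that its convergence to $\int f\,dP$ ``is the crux, and it is genuinely delicate,'' and then you list two candidate strategies without carrying either out. That crux is not a technical remainder --- it \emph{is} the theorem. Your strategy (ii) (pass to the discrete magnification process, where stationarity supplies the Palm structure, then return to the flow) is essentially the route the literature takes, but executing it occupies a substantial portion of \cite{hochman2010dynamics}: approximation of scenery distributions by centerings of CP-distributions, verification of the quasi-Palm property for such centerings, and a limiting argument. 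Naming that route as an option is a pointer to a proof, not a proof. Strategy (i) is unlikely to work as stated: the fact that $\mu$-a.e.\ window point $z$ also generates $P$ is an asymptotic statement for each \emph{fixed} $z$, with no uniformity across the shrinking windows $W_{x_0,\tau}$, and that lack of uniformity is precisely the obstruction you identified.

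There is also a structural problem with the reduction itself: you aim for the exact identity $P^{\ast t}=P$, which is strictly stronger than the quasi-Palm property and is false in general. Drawing $x$ from $\eta|_{V_t}$ with the normalization $1/\eta(V_t)$ introduces a selection bias: the window $V_t$ is not aligned with the hierarchical structure of a $P$-typical $\eta$, so conditioning the base point to lie in $V_t$ reweights, and partially truncates, the cells of $\eta$ meeting $\partial V_t$. What survives is mutual absolute continuity of $P^{\ast t}$ and $P$ (with controlled densities), not equality; this is exactly why the property is called \emph{quasi}-Palm. Indeed, if equality held for every generated distribution it would hold for every ergodic EFD, since almost every measure of an ergodic EFD generates it, and the ``quasi'' in the standard definition would be redundant. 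A smaller but related flaw: your appeal to generation for the functional $g_t(\eta)=\frac{1}{\eta(V_t)}\int_{V_t}f(\eta_{x,t})\,d\eta(x)$ is not licit as written, because generation gives convergence of time averages only against \emph{continuous} test functions, and $g_t$ is discontinuous at measures charging $\partial V_t$ (and where the conditioning degenerates); justifying that step requires knowing $P$ assigns no mass to the discontinuity set, which is uncomfortably close to the property being proved. In summary: your proposal is a reasonable formalization of what must be shown and an accurate diagnosis of where the difficulty lies, but the theorem itself remains unproved, and the headline identity it targets should be weakened from equality to equivalence.
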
 

We shall refer henceforth to $S$-ergodic distributions $P$ supported on $\mathcal{M}^\square$ that satisfy the conclusion of Theorem \ref{Theorem 4.7} as EFD's (Ergodic Fractal Distributions), a term coined by Hochman in \cite{hochman2010dynamics}. The next Theorem shows that the measures we are considering in Theorem \ref{Theorem inv} and in Theorem \ref{Theorem ss} generate non-trivial EFD's. 
\begin{theorem} (\cite{hochman2010dynamics}, Section 4) \label{EFD exp}
Let $\mu\in \mathcal{P}([0,1]^2)$ be either a $T_p \times T_p$ invariant ergodic measure with $\dim \mu >0$, or a non trivial self similar measure with respect to an IFS $\Phi$ satisfying the SSC. Then $\mu$ generates a non-trivial EFD $P$. 
\end{theorem}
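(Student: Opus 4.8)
The plan is to treat the two cases --- $T_p\times T_p$-invariant ergodic measures and non-trivial self-similar measures with the SSC --- in parallel, since in both a natural \emph{discrete} scaling dynamics is available that can then be promoted to the continuous scaling flow $S$. In each case I would proceed in three stages: (i) exhibit a measure-preserving system governing the integer-scale scenery and use the ergodic theorem to show that the discrete scenery equidistributes, $\mu$-a.e., for a fixed ergodic distribution; (ii) pass from this discrete limit to the continuous flow $S$ to obtain an $S$-invariant distribution $P$ generated by $\mu$, at which point Theorem \ref{Theorem 4.7} (together with the $S$-invariance and support statement cited just before it) upgrades $P$ to an EFD once ergodicity is in hand; (iii) verify non-triviality by a dimension count. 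The substantive points are the ergodicity in (i) and the discrete-to-continuous passage in (ii); non-triviality is then essentially automatic.

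For the $T_p\times T_p$-invariant case I would use the $p$-adic partition of $[0,1]^2$ into cells of side $p^{-n}$. Zooming into the level-$n$ cell $C_n(x)$ containing a $\mu$-typical $x$, renormalized to $[0,1]^2$, defines the discrete scenery; because $\mu$ is $T_p\times T_p$-invariant, the one-step magnification (pass from $C_n(x)$ to $C_{n+1}(x)$ and rescale by $p$) is governed by a stationary process --- Hochman's CP-chain --- whose base is the ergodic system $(T_p\times T_p,\mu)$. The key input is that ergodicity of the base propagates to this magnification system, so that $\mu$-a.e.\ $x$ generates the \emph{same} ergodic CP-distribution $Q$ by Birkhoff's theorem. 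I would then ``continuize'': averaging $Q$ over the fractional scale $t\in[0,\log p)$ and over a random recentering within each cell produces the distribution $P$ generated under $S$. This recentering is exactly what forces $P$ to be supported in $\mathcal{M}^\square$ and to satisfy the quasi-Palm conclusion of Theorem \ref{Theorem 4.7}.

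For the self-similar case I would instead use the coding $\Pi:\{1,\dots,l\}^{\mathbb N}\to X$ and the Bernoulli measure $\beta=p^{\mathbb N}$, which the SSC makes a measure isomorphism onto $\mu$. Zooming into $\mu$ at $x=\Pi(i_1,i_2,\dots)$ along the cylinder images $\phi_{i_1}\circ\cdots\circ\phi_{i_n}$ replaces $\mu$, up to translation, by $O_{i_1}\cdots O_{i_n}\mu$ scaled by $r_{i_1}\cdots r_{i_n}$, so the natural scale is $-\log(r_{i_1}\cdots r_{i_n})=\sum_{j\le n}(-\log r_{i_j})$. The continuous flow therefore becomes, along these scales, a suspension over the shift with integrable roof $-\log r_{i_1}>0$, carrying an extra orthogonal cocycle $O_{i_1}\cdots O_{i_n}$ in the compact group $O(\mathbb{R}^2)$. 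Since the base shift is ergodic and the roof is integrable, this suspension (with the $O(\mathbb{R}^2)$-valued cocycle adjoined to the state space) is ergodic, and the ergodic theorem again yields equidistribution of the scenery for a single ergodic $P$; compactness of $O(\mathbb{R}^2)$ is what lets the rotation/reflection component be absorbed without destroying ergodicity, so that no finiteness of $G_\Phi$ is needed here.

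Finally, for non-triviality I would argue by dimension. If $P$ were the trivial distribution $\delta_{\delta_0}$, the scenery of $\mu$-a.e.\ $x$ would zoom onto a point mass, forcing $\dim(\mu,x)=0$ almost everywhere and hence $\dim\mu=0$ by \eqref{Eq lower dim}; equivalently, the dimension of the generated EFD equals $\dim\mu$, while the trivial EFD has dimension $0$. In the $T_p\times T_p$ case $\dim\mu>0$ is assumed, and in the self-similar case $\dim\mu=\big(\sum_k p_k\log p_k\big)/\big(\sum_k p_k\log r_k\big)>0$ automatically, so $P$ is non-trivial in both cases. I expect the main obstacle to be stage (ii): proving ergodicity of the limiting distribution and correctly executing the discrete-to-continuous passage (in particular the recentering that produces the quasi-Palm property), since these are precisely the technical core of Hochman's construction carried out in full in \cite{hochman2010dynamics}, which I would ultimately invoke.
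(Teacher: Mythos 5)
The paper does not prove this statement at all --- it is imported verbatim from Hochman (\cite{hochman2010dynamics}, Section 4) --- and your proposal is a faithful outline of exactly that construction: discrete magnification dynamics over the natural extension (CP-chain) in the invariant case, the coded suspension with an isometry cocycle in the self-similar case, continuization by averaging over the fractional scale with recentering, the quasi-Palm property via Theorem \ref{Theorem 4.7}, and non-triviality from $\dim P=\dim\mu>0$ (Theorem \ref{Prop of EFD}); since you also say you would ultimately invoke \cite{hochman2010dynamics} for the technical core, your route coincides with the paper's. One caveat: your stated reason for ergodicity in the self-similar case --- that compactness of $O(\mathbb{R}^2)$ lets the rotation/reflection cocycle be ``absorbed without destroying ergodicity'' --- is not a valid principle, since compact group extensions of ergodic systems need not be ergodic (take the trivial cocycle: a Bernoulli base times the identity on a circle with Haar measure). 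What makes it work here is that the fiber group is the \emph{closure of the group generated by the cocycle values} and the base is Bernoulli, after which ergodicity follows from a character/representation-theoretic (or random-walk equidistribution) argument; this is precisely the kind of analysis the paper itself performs on this very skew product in Claim \ref{Lemma app}. Because you defer that core to Hochman, this is an imprecision in the sketch rather than a fatal gap, but as written that sentence asserts a false general statement.
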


Next, we survey  several useful properties of EFD's.
\begin{theorem} \label{Prop of EFD}  \cite{hochman2010dynamics}
Let $P$ be an EFD and let $\pi:\mathbb{R}^2 \rightarrow \mathbb{R}$ be an orthogonal projection. Then:
\begin{enumerate}
\item $P$ almost every $\nu$ is exact dimensional. If $P$ is generated by a measure   $\mu \in \mathcal{P}([0,1]^2)$ then $P$ almost surely $\dim \nu = \dim \mu$. 

\item  For $P$ almost every $\nu$, for $\pi \nu$ almost every $x$, the conditional measure $\nu_{\pi^{-1}(x)}$ is exact dimensional and
\begin{equation*}
\dim \nu = \dim \pi \nu + \dim \nu_{\pi^{-1} (x)}
\end{equation*}

\item For $P$ almost every $\nu$ the conditional measure $\nu_{\pi^{-1}(0)}$  is well defined. 

\item Let $P_{\pi^{-1} (0)}$ be the pushforward of $P$ via the map $\nu \mapsto \nu_{\pi^{-1} (0)}$. Then $P_{\pi^{-1} (0)}$ is an EFD that is a factor of the EFD $P$. In particular, for $P$ almost every $\nu$ and $\pi \nu$ almost every $x$, $\nu_{\pi^{-1} (x)}$ generates the EFD $P_{\pi^{-1} (0)}$.
\end{enumerate}
\end{theorem}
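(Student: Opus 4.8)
The plan is to derive all four items from the two defining structural features of an EFD: $S$-invariance together with $S$-ergodicity (which let me replace local quantities by Birkhoff averages along the scaling flow), and the $S$-quasi-Palm property of Theorem \ref{Theorem 4.7} (which lets me transfer statements holding at the distinguished center $0$ to statements holding at $\nu$-typical points $x$). The recurring mechanism is that ``zooming'' is a flow, so the local dimension, the fibre dimensions, and the sceneries of conditional measures all become time averages of suitable observables on $\mathcal{M}^\square$.

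\textbf{Item 1.} I would first introduce an observable $g$ on $\mathcal{M}^\square$, essentially the origin mass ratio $\eta\mapsto-\log\big(\eta(B(0,e^{-1}))/\eta(B(0,1))\big)$, for which the normalization in the definition of $S_t$ cancels, so that
\[
\int_0^T g(S_t\nu)\,dt \;=\; -\log \nu\big(B(0,e^{-T})\big) + O(1).
\]
Dividing by $T$ and applying the ergodic theorem for the flow $(S_t)$ shows that $\lim_{T\to\infty}-\tfrac1T\log\nu(B(0,e^{-T}))$ exists and equals the constant $\alpha := \int g\,dP$ for $P$-a.e.\ $\nu$; thus the local dimension exists at $0$ and is $P$-a.s.\ equal to $\alpha$. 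To pass from the single point $0$ to $\nu$-a.e.\ $x$, I would use the quasi-Palm property: for $\nu$-typical $x$, zooming at $x$ and recentering yields a $P$-typical measure centered at its own origin, so the statement already proved at $0$ applies there, giving exact dimensionality of $\nu$ with value $\alpha$. Finally, if $P$ is generated by $\mu$, then the scenery of $\mu$ at $\mu$-a.e.\ $x$ equidistributes for $P$, and the same cocycle identity gives $\dim(\mu,x)=\int g\,dP=\alpha$; hence $\dim\mu=\alpha=\dim\nu$ for $P$-a.e.\ $\nu$.

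\textbf{Items 2--4.} The crucial observation is that the scaling flow commutes with disintegration along a \emph{linear} projection fixing the origin: the fibre $\pi^{-1}(0)$ is a line through $0$ preserved by $S_t$ up to the intrinsic rescaling, so $(S_t\nu)_{\pi^{-1}(0)} = S_t\big(\nu_{\pi^{-1}(0)}\big)$ under the obvious identification. Consequently the map $\Psi:\nu\mapsto\nu_{\pi^{-1}(0)}$ intertwines the flow, $P_{\pi^{-1}(0)}:=\Psi_*P$ is $S$-invariant, ergodicity passes to the factor, and one checks that the quasi-Palm property survives the pushforward; this proves item 4. For item 3, the $P$-a.s.\ well-definedness of the particular conditional $\nu_{\pi^{-1}(0)}$ is again a quasi-Palm statement: although $\{0\}$ is $\pi\nu$-null for a fixed $\nu$, quasi-Palm makes the center $0$ ``generic'' under $P$, so a martingale/continuity argument along a refining sequence of $\pi$-adapted partitions produces a well-defined conditional through $0$ for $P$-a.e.\ $\nu$. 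For item 2, exact dimensionality of $\pi\nu$ follows from applying item 1 to the one-dimensional EFD $\pi P$, exact dimensionality of the conditionals from applying item 1 to $P_{\pi^{-1}(0)}$, and the additive identity $\dim\nu=\dim\pi\nu+\dim\nu_{\pi^{-1}(x)}$ from splitting the Birkhoff average computing $\dim\nu$ into its base and fibre contributions. The ``in particular'' clause then follows because $\nu_{\pi^{-1}(x)}$ recentered at $x$ is $P_{\pi^{-1}(0)}$-typical, so its scenery equidistributes for $P_{\pi^{-1}(0)}$.

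\textbf{Main obstacle.} I expect the genuine work to lie in the exact additivity of item 2 and the well-definedness of item 3, rather than in the ergodic-theorem computations. For general exact-dimensional measures one only has the inequality $\dim\nu\le\dim\pi\nu+\dim\nu_{\pi^{-1}(x)}$; forcing equality requires exploiting the homogeneity of an EFD under zooming so that the base and fibre scaling rates add with no loss, which is where the quasi-Palm property must be used most carefully. The conditional-through-the-origin issue in item 3 is a close second, as it rests on the same device of converting a measure-zero center into a generic one.
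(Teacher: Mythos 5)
There is an important point of order first: the paper never proves this theorem. It is quoted wholesale from Hochman \cite{hochman2010dynamics} (exact dimensionality of typical measures of fractal distributions, dimension conservation for FDs, and the fact that conditioning on the central fiber defines a factor of the scaling flow are all results of that paper), so there is no ``paper's own proof'' to compare your attempt against. Judged as a reconstruction of the source arguments, your high-level mechanisms are the right ones: the telescoping observable $g$ whose time average along the scaling flow recovers $-\frac{1}{T}\log\nu(B(0,e^{-T}))$, the quasi-Palm transfer from the distinguished point $0$ to $\nu$-typical points, and the intertwining $(S_t\nu)_{\pi^{-1}(0)}=S_t(\nu_{\pi^{-1}(0)})$ that makes $\nu\mapsto\nu_{\pi^{-1}(0)}$ a factor map of $(P,S)$.

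That said, the proposal contains one outright error and two deferrals of the genuine content. The error: you claim that for general exact-dimensional measures ``one only has $\dim\nu\le\dim\pi\nu+\dim\nu_{\pi^{-1}(x)}$.'' The general inequality goes the other way: $\dim\nu\ge\dim\pi\nu+\dim\nu_{\pi^{-1}(x)}$ almost surely. This is exactly the fact the paper itself uses in its introduction (citing Lemma 2.2 of \cite{algom2019simultaneous}) to conclude that $\dim\pi\mu=\dim\mu$ forces zero-dimensional fibers; and the paper's own Bedford--McMullen example --- $\dim\mu=1$, $\dim P_1\mu=\frac{1}{2}$, with atomic $P_1$-fibers --- violates your inequality. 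So the hard half of item 2, the one requiring the EFD structure, is the upper bound $\dim\nu\le\dim\pi\nu+\dim\nu_{\pi^{-1}(x)}$, the reverse of what you flag. The deferrals: (i) in item 1, equidistribution of the sceneries of $\mu$ for $P$ does not directly yield $\dim(\mu,x)=\int g\,dP$, because $g$ is built from masses of balls and is neither weak-$\ast$ continuous nor a priori $P$-integrable; equidistribution only controls continuous bounded test functions, so one needs upper and lower semicontinuous truncations and a two-sided sandwich, which is where the real work in Hochman's proof lies. (ii) Items 3 and 4 are circular as you have arranged them: the intertwining identity you invoke to prove item 4 presupposes that $\nu_{\pi^{-1}(0)}$ is already well defined, which is item 3, and your ``martingale/continuity argument'' for item 3 is named but never given --- constructing the conditional on the null fiber $\pi^{-1}(0)$ as an almost-sure limit along shrinking strips, with the quasi-Palm property supplying the convergence, is precisely the nontrivial step. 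In short, your outline is a faithful road map of \cite{hochman2010dynamics}, but with the hardest segments marked ``to be done'' and one signpost pointing the wrong way.
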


We remark that by the statement "$P_{\pi^{-1} (0)}$ is a factor of the $P$" we mean that the dynamical system $(P_{\pi^{-1} (0)}, S)$ is a factor of the system  $(P, S)$

Finally, to an $S$-invariant distribution $P$ we associate its pure point spectrum $\Sigma(P,S)$. This set consists of all the $\alpha \in \mathbb{R}$ for which there exists  a non-zero measurable function $\phi:\mathcal{M}^{\square} \rightarrow \mathbb{C}$ such that $\phi\circ S_t = \exp(2 \pi i \alpha t)\phi$ for every $t\geq 0$, on a set of full $P$ measure. The existence of such an eigenfunction indicates that some non-trivial feature of the measures of $P$ repeats periodically under magnification by $e^\alpha$.

\subsection{Actions of diagonal endomorphisms on measures supported on lines}
We first recall the following result, about actions of diagonal endomorphisms on affine images of one dimensional measures. We denote the set of invertible real affine maps by $\text{Aff}(\mathbb{R})$.
\begin{theorem} (\cite{algom2019simultaneous}, Theorem 6.2) \label{Conjecture pert}
Let $\mu \in \mathcal{P}([0,1])$ be a probability measure, $f,g\in \text{Aff}(\mathbb{R})$ be such that $f([0,1]), g([0,1])\subseteq [0,1]$, and $m>n>1$ be integers, such that:
\begin{enumerate}
\item The measure $\mu$ generates a non-trivial EFD  $P\in \mathcal{P}(\mathcal{P}([-1,1]))$.

\item The pure point spectrum $\Sigma(P,S)$ does not contain a non-zero integer multiple of $\frac{1}{\log m}$.

\item  The measure $g\mu$ is pointwise generic under $T_n$ for an ergodic and continuous measure  $\rho$.
\end{enumerate}
Then 
\begin{equation} 
\frac{1}{N} \sum_{i=0} ^{N-1} \delta_{(T_m ^i f(x) , T_n ^i g(x))} \rightarrow \lambda\times \rho, \quad \text{ for } \mu \text{ almost every } x.
\end{equation}
\end{theorem}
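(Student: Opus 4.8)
The plan is to reduce the convergence of the empirical measures to $\lambda\times\rho$ to a family of one-dimensional oscillatory estimates, and then to extract these from the scaling-flow description of $\mu$ together with the spectral hypothesis (2). First I would invoke Weyl's criterion together with the Stone--Weierstrass theorem: the measure $\lambda\times\rho$ is determined by its integrals against the functions $(u,v)\mapsto \exp(2\pi i a u)\,h(v)$ with $a\in\mathbb{Z}$ and $h$ ranging over a countable dense subfamily of $C([0,1])$, so it suffices to prove, for $\mu$-a.e.\ $x$ (the exceptional set being a countable union over $a$ and $h$), that
\begin{equation*}
\frac1N\sum_{i=0}^{N-1}\exp\!\big(2\pi i\,a\,T_m^i f(x)\big)\,h\big(T_n^i g(x)\big)\ \longrightarrow\ \Big(\int \exp(2\pi i a u)\,d\lambda(u)\Big)\Big(\int h\,d\rho\Big).
\end{equation*}
For $a=0$ the right-hand side is $\int h\,d\rho$ and the convergence is exactly hypothesis (3). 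Since $\int\exp(2\pi i a u)\,d\lambda=0$ for $a\neq0$, the whole theorem reduces to showing that for every nonzero integer $a$ and every $h$ in the dense family the mixed sum tends to $0$ almost surely. I note at the outset that the affine maps $f,g$ are harmless for the scaling-flow analysis below: zooming in forgets the affine chart, so $f\mu$ and $\mu$ generate the same EFD $P$ (up to a reflection, immaterial for what follows), and hypotheses (1)--(2) may be read for $f\mu$.

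The heart of the matter is the interpretation of the factor $\exp(2\pi i a\,T_m^i f(x))=\exp(2\pi i\,a\,m^i f(x))$ through the scenery of $f\mu$ at $f(x)$. Zooming in at $f(x)$ by the factor $m^i=e^{i\log m}$ rescales the scale-$m^{-i}$ neighbourhood of $f(x)$ to unit size, turning the very high frequency character $\exp(2\pi i\,a m^i\,\cdot\,)$ into a fixed frequency-$a$ character on the rescaled picture; thus $\exp(2\pi i a m^i f(x))$ is, up to a continuity error that I would control uniformly in $i$, a fixed continuous functional $\Psi_a$ evaluated on the scenery sample $\mu_{f(x),\,i\log m}=S_{i\log m}\big((f\mu)^{f(x)}\big)$. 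In this language the unweighted average $\frac1N\sum_i\Psi_a(\mu_{f(x),i\log m})$ is a discrete, $\log m$-spaced ergodic average along the scenery. Hypothesis (2) is exactly the condition under which this sampling equidistributes and the base-$m$ position phase of $f(x)$ equidistributes to the uniform (Haar) measure on the circle: equidistribution of the phase can be obstructed only by an eigenfunction of $(P,S)$ at a resonant frequency $\tfrac{j}{\log m}$ with $j\neq0$, and (2) forbids exactly these. This is the scaling-flow form of Host/Hochman--Shmerkin normality, and it gives $\frac1N\sum_i\Psi_a(\mu_{f(x),i\log m})\to 0$ for the unweighted average.

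To incorporate the weights $h(T_n^i g(x))$ I would pass to a joint system built over the EFD $P$. Since $g\mu$ generates the same EFD and its $T_n$-orbit data is itself measurable with respect to the scaling flow sampled at rate $\log n$, both factors live over a single scaling-flow system, to which I apply the pointwise ergodic theorem, using the $S$-invariance, ergodicity, and quasi-Palm property recorded in Theorems \ref{Theorem 4.7} and \ref{Prop of EFD}, and then identify the limit. The decisive point is that hypothesis (2) upgrades the mere equidistribution of the base-$m$ phase to a weak-mixing-type statement: the absence of an eigenvalue in $\tfrac1{\log m}(\mathbb{Z}\setminus\{0\})$ means that, along the $S_{\log m}$-sampling, the phase factor decorrelates from every other flow-measurable weight, and in particular from the convergent sequence $h(T_n^i g(x))$. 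This is precisely what makes the mixed average vanish and yields the product $\lambda\times\rho$ rather than a nontrivial joining of the two coordinates.

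I expect the main obstacle to be exactly this decoupling, carried out pointwise almost everywhere. Two features make it delicate. First, the theorem allows $m\sim n$ (e.g.\ $m=4,\ n=2$), so one cannot buy independence of the two coordinates from incommensurability of $\log m$ and $\log n$; the decoupling must instead be extracted entirely from the rigidity of $\lambda$ as the limit of the first coordinate together with the no-eigenvalue hypothesis (2). Second, one must upgrade the relevant mean/ergodic convergence to genuine almost-everywhere convergence of the $x$-averages, while simultaneously controlling, uniformly in $i$, the error incurred in replacing $\exp(2\pi i a m^i f(x))$ by the scenery functional $\Psi_a$. I would handle the pointwise upgrade by a maximal inequality, or by a variance estimate combined with Borel--Cantelli along a suitable subsequence, and the approximation error by the continuity of $\Psi_a$ together with the tightness of the sceneries on $\mathcal{M}^{\square}$.
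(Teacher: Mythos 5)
Your reduction via Weyl's criterion and Stone--Weierstrass is fine, and you correctly identify that the $a=0$ term is exactly hypothesis (3); but your treatment of the terms $a\neq 0$ has two genuine gaps. First, the phase $e(a\, m^i f(x))$ is \emph{not} a functional (continuous or otherwise) of the scenery $\mu_{f(x),\,i\log m}=S_{i\log m}\big((f\mu)^{f(x)}\big)$: the scenery is re-centered at $f(x)$, so it retains no information about where $f(x)$ sits relative to the base-$m$ grid, whereas the fractional part $\lbrace m^i f(x)\rbrace$ is precisely that grid offset. Bridging translation-centered sceneries and grid-based (base-$m$) data is one of the main technical hurdles in this whole theory --- in \cite{hochmanshmerkin2015} it is handled through CP-processes and conditional-measure arguments, not through a ``uniformly controlled continuity error'' --- so your functional $\Psi_a$ does not exist as described, and the unweighted average $\frac{1}{N}\sum_i \Psi_a(\mu_{f(x),i\log m})$ does not represent the Weyl sum.

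Second, and more fundamentally, your ``decisive point'' --- that hypothesis (2) upgrades to a weak-mixing-type decorrelation of the phase against every flow-measurable weight --- is not available. The absence of eigenvalues $k/\log m$, $k\neq 0$, yields exactly that $P$ is $S_{\log m}$-ergodic (this is Proposition 4.1 of \cite{hochmanshmerkin2015}, invoked in Section 4 of this paper), i.e.\ equidistribution of the $\log m$-sampled sceneries; ergodicity of one factor gives no decorrelation against an arbitrary second sequence, since two ergodic (even weakly mixing) systems can carry nontrivial joinings, and no maximal inequality or Borel--Cantelli argument can manufacture the missing disjointness. Note the paper does not prove this theorem; it imports it from \cite{algom2019simultaneous} (Theorem 6.2), and the scheme of that proof is sketched here in the proof of Claim \ref{Claim spectral}. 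That scheme obtains the decoupling from \emph{rigidity}, not from spectral decorrelation: one passes to a subsequential limit $\alpha$ of the joint orbit measures, which is $T_m\times T_n$-invariant with $P_2\alpha=\rho$; one represents the conditional measures $P_1\alpha_{P_2^{-1}(y)}$ in terms of $\log m$-sampled sceneries (\cite{algom2019simultaneous}, Claim 4.1); one applies the convolution-dimension lemma of \cite{hochmanshmerkin2015} (Lemma 8.3) together with Lindenstrauss's convolution measures \cite{Elon1999conv} to conclude $\dim \tau * \big(P_1\alpha_{P_2^{-1}(y)}\big)=1$ for a countable family of measures $\tau$; and then a rigidity statement (\cite{algom2019simultaneous}, Claim 3.4) forces $\alpha=\lambda\times\rho$. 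Your proposal contains no substitute for this fiber-wise dimension-rigidity mechanism, which is where the product structure actually comes from --- this matters all the more because, as you yourself observe, $m$ and $n$ may be multiplicatively dependent, so no independence between the two coordinates can be extracted from $m$ and $n$ themselves.
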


The following Theorem is an almost formal consequence of the previous one:
\begin{theorem}  \label{Theorem lines}
Let $\mu \in \mathcal{P}(\ell\cap [0,1]^2)$ be a probability measure, where $\ell$ is some affine line not parallel to the principal axes, and let $m>n>1$ be integers, such that:
\begin{enumerate}
\item The measure $\mu$ generates a non-trivial EFD $P\in \mathcal{P}(\mathcal{P}([-1,1]^2))$.

\item The pure point spectrum $\Sigma(P,S)$ does not contain a non-zero integer multiple of $\frac{1}{\log m}$.

\item  The measure $P_2 \mu$ is pointwise generic under $T_n$ for an ergodic and continuous measure  $\rho$.
\end{enumerate}
Then 
\begin{equation} \label{Eq I1 pert}
\frac{1}{N} \sum_{i=0} ^{N-1} \delta_{(T_m ^i x , T_n ^i y)} \rightarrow \lambda\times \rho, \quad \text{ for } \mu \text{ almost every } (x,y).
\end{equation}
\end{theorem}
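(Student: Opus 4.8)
The plan is to deduce Theorem \ref{Theorem lines} from the one–dimensional statement of Theorem \ref{Conjecture pert} by parametrizing the line $\ell$. Since $\ell$ is not parallel to either principal axis, I would fix an affine isomorphism $L:\mathbb{R}\to\ell$, written $L(t)=(f(t),g(t))$, where $f,g\in\text{Aff}(\mathbb{R})$ both have non-zero slope. Setting $\tilde\mu:=L^{-1}\mu$ and, after an affine reparametrization of the source restricted to the convex hull of the support, arranging that $\tilde\mu\in\mathcal{P}([0,1])$ with $f([0,1]),g([0,1])\subseteq[0,1]$ (a routine normalization, since the support of $\mu$ lies in $[0,1]^2$), the identity $L\tilde\mu=\mu$ gives, for the parameter $t$ with $(x,y)=L(t)$,
\[
(T_m^i x,\,T_n^i y)=(T_m^i f(t),\,T_n^i g(t)).
\]
Moreover $g\tilde\mu=(g\circ L^{-1})\mu=P_2\mu$, because $g\circ L^{-1}$ is exactly the coordinate map $(x,y)\mapsto y$ on $\ell$. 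Hence hypothesis (3) of Theorem \ref{Theorem lines} is literally hypothesis (3) of Theorem \ref{Conjecture pert} for the data $(\tilde\mu,f,g)$, with the same ergodic continuous measure $\rho$.

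It remains to transfer hypotheses (1) and (2) from $\mu$ to $\tilde\mu$, and this is the only non-formal point. It rests on the fact that $L$ is an affine embedding whose linear part commutes with the scaling flow $S$: magnifying a measure on $\ell$ by $e^{-s}$ corresponds, under $L^{-1}$, to magnifying $\tilde\mu$ by the same factor, so the induced map $\nu\mapsto L\nu$ intertwines the flow on $\mathcal{P}(\mathcal{P}([-1,1]))$ with the flow on $\mathcal{P}(\mathcal{P}([-1,1]^2))$; the two truncation windows, interval versus square, affect only the boundary of each scene and wash out in the Cesàro limit. I would therefore argue that $\tilde\mu$ generates a non-trivial EFD $\tilde P$ with $P=L\tilde P$, so that $(\tilde P,S)$ and $(P,S)$ are measurably conjugate as scaling-flow systems. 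Non-triviality of $\tilde P$ follows from that of $P$ since $L$ is injective, which gives hypothesis (1); and since the pure point spectrum is an invariant of the scaling-flow dynamical system, the conjugacy yields $\Sigma(\tilde P,S)=\Sigma(P,S)$, so the absence of non-zero integer multiples of $\tfrac{1}{\log m}$ transfers verbatim, giving hypothesis (2). (The affine reparametrization of the source used above is itself a scaling-flow conjugacy, hence harmless for this step.)

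With all three hypotheses of Theorem \ref{Conjecture pert} verified for $(\tilde\mu,f,g)$, I would apply that theorem to conclude that for $\tilde\mu$-almost every $t$,
\[
\frac{1}{N}\sum_{i=0}^{N-1}\delta_{(T_m^i f(t),\,T_n^i g(t))}\longrightarrow \lambda\times\rho.
\]
Translating back through $(x,y)=L(t)$ and $\tilde\mu=L^{-1}\mu$ converts ``$\tilde\mu$-almost every $t$'' into ``$\mu$-almost every $(x,y)$'' and converts the empirical measures into those of \eqref{Eq I1 pert}, which completes the proof. I expect the main obstacle to be the careful justification of the scenery transfer in the second paragraph — namely that the affine parametrization $L$ induces a genuine conjugacy of generated distributions preserving both non-triviality and the pure point spectrum; once this is granted, everything else reduces to the bookkeeping of identifying $f$, $g$, and $\rho$.
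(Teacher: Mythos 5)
Your proposal is correct and follows essentially the same route as the paper: the paper also reduces to Theorem \ref{Conjecture pert} by parametrizing $\ell$, choosing specifically the $y$-coordinate as parameter (so $\nu=P_2\mu$, $g=\mathrm{id}$, and $f$ the affine map with $(f(y),y)\in\ell$), which makes hypothesis (3) immediate just as in your argument. The scenery-transfer step you flag as the main obstacle — that an affine parametrization of the line carries the generated EFD to a conjugate one with the same non-triviality and pure point spectrum — is exactly what the paper disposes of by citing Proposition 1.9 of \cite{hochman2010dynamics}, so your sketch of that step is the intended justification.
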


\begin{proof}
Let $\nu=P_2 \mu$, so $\nu$ is a measure on $[a,b]\subseteq [0,1]$ where $[a,b]=P_2 (\ell\cap [0,1]^2)$. Then $\nu$ satisfies conditions (1) and (2) of Theorem \ref{Conjecture pert},   which follows from e.g. (\cite{hochman2010dynamics}, Proposition 1.9). Now, let $f,g:[a,b]\rightarrow [0,1]$ be the affine maps such that $g(x)=x$, and $f(x)\in [0,1]$ is defined uniquely by requiring that $(f(x),x)\in \ell \cap [0,1]^2$ and that the map $x\mapsto (f(x),g(x))$ is surjective onto $\ell\cap [0,1]^2$. Then $g\nu=P_2 \mu$ is pointwise generic under $T_n$ for an ergodic and continuous measure  $\rho$. Finally, the measure obtained by pushing $\nu$ forward via $x\mapsto (f(x),g(x))$ is exactly the initial measure $\mu$ that we began with. Thus, the result follows from Theorem \ref{Conjecture pert}. 
\end{proof}

\section{On the proof of Theorem \ref{Theorem ss}}
\subsection{On the scaling scenery of self similar measures}
Let $\mu$ be a measure as in Theorem \ref{Theorem ss}.  We begin with the following refinement of Theorem \ref{EFD exp}. We  write $\nu_1 \sim_C \nu_2$ to indicate that the measures $\nu_1,\nu_2$ are mutually absolutely continuous with both Radon-Nikodym densities bounded by $C$, i.e. $\frac{1}{C} \leq \frac{d \nu_1}{d \nu_2} \leq C$.
\begin{Claim} (\cite{hochman2010dynamics}, Section 4.3 and Proposition 1.36) \label{Claim gen ss}
The EFD $P$ that $\mu$ generates admits a constant  $C>0$ such that: 

For $P$ almost every $\nu$  there are invertible similarity maps $f,g$ such that 
\begin{equation*}
\nu\sim_C (g\mu)|_{[-1,1]^2},\quad \mu \sim_C (h\nu)_{[0,1]^2}.
\end{equation*}
Moreover, the orthogonal parts of $f$ and $g$ belong to the group $\overline{G_\Phi}$ (the closure of $G_\Phi$), and their distribution  is given by the corresponding Haar measure.
\end{Claim}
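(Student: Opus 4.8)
The plan is to make the scaling scenery of $\mu$ completely explicit by means of the symbolic coding furnished by the SSC, and then read off both the $\sim_C$ comparison and the Haar distribution of the orthogonal parts directly from this description. Write $c=\log(1/r)>0$ for the uniform logarithmic contraction rate, let $\Omega=\{1,\dots,l\}^{\mathbb{N}}$ carry the Bernoulli measure $\mathbb{P}=p^{\mathbb{N}}$, and let $\kappa=\sum_{k=1}^l p_k\,\delta_{O_k}$ be the associated law on $SO(\mathbb{R}^2)$. Under the SSC the coding map $\omega=(\omega_1,\omega_2,\dots)\mapsto \lim_n \phi_{\omega_1\cdots\omega_n}(0)$ is a bimeasurable bijection (mod $\mu$-null sets) from $(\Omega,\mathbb{P})$ onto $(\supp\mu,\mu)$, and for every finite word $w$ the normalized restriction of $\mu$ to the cylinder $\phi_w(X)$ equals $\phi_w\mu$. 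This restriction--rescaling principle is the engine of the proof: applying the inverse similarity $\phi_w^{-1}$ to $\mu|_{\phi_w(X)}$ returns $\mu$ exactly.

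First I would describe the scenery at a $\mu$-typical $x$ with coding $\omega$. Because every level-$n$ cylinder has logarithmic size $nc$, at scale $t=nc$ the recentered and rescaled measure $\mu_{x,nc}=S_{nc}(\mu^x)$ is, up to $\sim_C$ and the restriction to the window $[-1,1]^2$, the image of $\mu$ under the isometry $z\mapsto O_{\omega_1\cdots\omega_n}(z-y)$, where $y$ is the point coded by $(\omega_{n+1},\omega_{n+2},\dots)$ and $O_{\omega_1\cdots\omega_n}=O_{\omega_1}\cdots O_{\omega_n}\in G_\Phi$; note that the pure scaling $S_{nc}$ contributes no rotation, so the orthogonal part is exactly the cylinder rotation $O_{\omega_1\cdots\omega_n}$. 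For a general scale $t=nc+s$ with $s\in[0,c)$ one further applies $S_s$, which only rescales by the bounded factor $e^{s}\in[1,1/r)$ and leaves the orthogonal part untouched. Hence every scene is of the form $(g\mu)|_{[-1,1]^2}$ with $g$ a similarity whose orthogonal part lies in $G_\Phi$, and since a $P$-typical $\nu$ is an accumulation point of such scenes, its orthogonal part lies in the closure $\overline{G_\Phi}$. The reverse comparison $\mu\sim_C(f\nu)|_{[0,1]^2}$ then follows by inverting $g$, using that $\overline{G_\Phi}$ is a group and so contains the orthogonal part of $g^{-1}$. Crucially, the time average defining $P$ decomposes as the average over $n$ governed by $\mathbb{P}$, together with the uniform average over the residual scale $s\in[0,c)$.

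For the Haar statement I would exploit that on the abelian group $SO(\mathbb{R}^2)\cong\mathbb{R}/\mathbb{Z}$ the orthogonal parts $O_{\omega_1\cdots\omega_n}$ are the partial products of the i.i.d.\ rotations $O_{\omega_i}$ with common law $\kappa$, so the pushforward of $\mathbb{P}$ under $\omega\mapsto O_{\omega_1\cdots\omega_n}$ is the convolution power $\kappa^{*n}$. Since $P$ is obtained precisely by Ces\`aro (time) averaging the scenes, the distribution of the orthogonal part of $g$ under $P$ is the Ces\`aro limit of $\kappa^{*n}$. On a compact abelian group this limit is normalized Haar measure on the closed subgroup generated by $\supp\kappa$, namely $\overline{G_\Phi}$: testing against a character $\chi$ gives $\tfrac1N\sum_{n=1}^N\widehat{\kappa}(\chi)^n\to \mathbf{1}[\widehat{\kappa}(\chi)=1]$, and, since the $p_k$ are positive and $|\chi(O_k)|=1$, strict convexity forces $\widehat{\kappa}(\chi)=\sum_k p_k\chi(O_k)=1$ to hold exactly when $\chi$ is trivial on every $O_k$, i.e.\ trivial on $\overline{G_\Phi}$. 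Thus the orthogonal part of $g$ is Haar-distributed on $\overline{G_\Phi}$; that of $f$ is its inverse, and is therefore also Haar-distributed since Haar measure on a compact group is inversion-invariant.

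The step I expect to be the main obstacle is making the $\sim_C$ comparison rigorous with a single constant $C$ independent of $\nu$, that is, controlling the boundary effects caused by restricting each scene to the fixed window $[-1,1]^2$ when $t$ falls strictly between consecutive cylinder levels $nc$ and $(n+1)c$. At such scales the window meets $\mu$ along a bounded number of level-$(n+1)$ sub-cylinders, and I would invoke the SSC --- which keeps these sub-cylinders uniformly separated and of comparable mass --- to bound the Radon--Nikodym density of $\mu_{x,t}$ against the relevant $g\mu$ from above and below by a constant depending only on $\Phi$ and $p$. This uniform comparison is exactly what we are entitled to import from (\cite{hochman2010dynamics}, Section 4.3 and Proposition 1.36); consequently the real work in a full write-up is to match our uniform-ratio, rotations-only setting to that reference rather than to reprove the comparison from first principles.
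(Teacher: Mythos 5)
You should first be aware that the paper does not prove this Claim at all: it is imported verbatim from \cite{hochman2010dynamics} (Section 4.3 and Proposition 1.36), and the only related material in the paper is the recollection of that construction in the proof of Proposition \ref{Claim spec ss}, where the EFD is realized as a factor of a suspension of a skew product over the Bernoulli system whose fiber is $\overline{G_\Phi}$ equipped with Haar measure. In the source, then, the comparison with similarity images of $\mu$ and the Haar distribution of the rotation parts are built into the construction (one verifies that Haar is invariant under the rotation cocycle), whereas you attempt to re-derive both features from the scenery itself. That is a legitimate alternative route, and your Ces\`aro--Fourier argument is a clean way to see why Haar must appear; note, however, that it computes the annealed law $\frac{1}{N}\sum_{n\le N}\kappa^{*n}$, while the Claim concerns $P$, which is generated along the scenery of a single $\mu$-typical point, so what is needed is the quenched statement that $\frac{1}{N}\sum_{n\le N}\delta_{O_{\omega_1}\cdots O_{\omega_n}}\to \mathrm{Haar}(\overline{G_\Phi})$ for $\mathbb{P}$-a.e.\ $\omega$. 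This is fixable by a routine second-moment bound on the character sums $\frac{1}{N}\sum_{n\le N}\chi(O_{\omega_1}\cdots O_{\omega_n})$, but it is a different statement and must be proved as such.

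Two of your steps have genuine gaps. First, the reverse comparison does not ``follow by inverting $g$''. The similarities $g$ produced by your scenery description have expansion ratio at least $1$, so $g^{-1}([-1,1]^2)$ is a (rotated) square of side at most $2$ centered at a point of $\supp\mu$, and such a square need not contain $\supp\mu$ (e.g.\ when the center is near a corner of $[0,1]^2$). Inverting therefore only compares $g^{-1}\nu$ with the restriction of $\mu$ to that window; it says nothing about the whole measure $\mu$, which is what the second comparison asserts. The missing idea is to invoke exact self-similarity a second time: choose a word $v$ with $g(\phi_v(X))\subseteq(-1,1)^2$, observe that $\nu$ restricted to $g(\phi_v(X))$ and normalized is $\sim_C g\phi_v\mu$, and apply $(g\phi_v)^{-1}$ together with the identity that the normalized restriction of $\mu$ to $\phi_v(X)$ equals $\phi_v\mu$ exactly. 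Second, your description of the scene at time $t=nc$ through the level-$n$ cylinder can fail outright when the separation gap $\epsilon:=\min_{j\neq k}d(\phi_j(X),\phi_k(X))$ is smaller than $\sqrt{2}\,r$: the window of radius $r^{n}$ about $x$ may then meet sibling cylinders, whose blown-up mass appears in the scene but is not carried (even up to bounded densities) by the isometric copy of $\mu$ you write down --- this is a support mismatch, which no constant $C$ repairs, and your closing paragraph misdiagnoses it as a density bound on ``sub-cylinders of comparable mass''. The standard repair is to pass to the ancestor cylinder at level $n-k_0$, with $k_0$ fixed so that $\sqrt{2}\,r^{k_0+1}<\epsilon$; with that choice every scene is \emph{exactly} a normalized restriction $(g\mu)|_{[-1,1]^2}$ with $g$ of bounded ratio and rotation part in $G_\Phi$, and the only place where a non-trivial constant $C$ (and hence the cited Proposition) is genuinely needed is in controlling boundary mass when passing to weak-$*$ limits along the scenery, which is indeed the step you defer to \cite{hochman2010dynamics}.
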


The following Proposition is about the pure point spectrum of this EFD $P$. From here and in what follows, for $s\in \mathbb{R}$ we set 
$$ e(s):=\exp(2 \pi i s)$$.
\begin{Proposition} \label{Claim spec ss} Let $\mu$ be a self similar measure as in Theorem \ref{Theorem ss}, with a uniform contraction ratio $r$. Let $P$ be the EFD that $\mu$ generates. Let 
\begin{equation*}
k_\Phi = \min \lbrace a\in \mathbb{N}:\quad  O_1 ^a = O_2 ^{a}, \text{ for all } O_1, O_2 \in D\Phi \rbrace
\end{equation*}
where if the set on the right hand side is empty we set $k_\Phi = \infty$.

\begin{enumerate}
\item If $k_\Phi = \infty$ then $\Sigma(P,S) \subseteq \mathbb{Z}\cdot \frac{1}{\log r}$. 

\item If $k_\Phi <\infty$, let $\gamma_\Phi \in [0, 2\pi)$ be such that for any $O\in D\Phi$, $O^{k_\Phi}$ is the rotation by the angle $\gamma_\Phi$. Then
\begin{equation*}
\Sigma(P,S) \subseteq \left( \mathbb{Z}\cdot \frac{1}{\log r} \right) \bigcup \left(  \mathbb{Z}  \cdot \frac{\gamma_\Phi}{\log r \cdot k_\Phi} \right)
\end{equation*} 
\end{enumerate}
\end{Proposition}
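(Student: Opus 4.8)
The plan is to realize the EFD $P$ as a suspension flow over an explicit skew product and then read off its pure point spectrum from that of the base. By Claim \ref{Claim gen ss}, $P$-almost every $\nu$ is, up to uniformly bounded densities, a rotated copy $R_\theta\mu$ of $\mu$, with the rotation $\theta$ distributed according to Haar measure on $G=\overline{G_\Phi}\le SO(2)$. Zooming in by the factor $1/r$, i.e. applying $S_{t_0}$ with $t_0=\log(1/r)$, replaces the local picture of $\mu$ inside the first cylinder $\phi_{\omega_1}$ by $\phi_{\omega_1}^{-1}\mu$, which is again $\mu$ rotated by $-\theta_{\omega_1}$ (here $\theta_k$ is the rotation angle of $O_k$, and I use that $SO(2)$ is abelian, so the order of the accumulated rotations is irrelevant). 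Accordingly I would model $P$ as the suspension, with constant roof function $t_0$, over the skew product
\begin{equation*}
\widehat{T}:\ \Omega\times G\to\Omega\times G,\qquad \widehat{T}(\omega,\theta)=(\sigma\omega,\ \theta-\theta_{\omega_1}),
\end{equation*}
where $\Omega=\{1,\dots,l\}^{\mathbb{N}}$ carries the Bernoulli measure $\beta=p^{\mathbb{N}}$, $\sigma$ is the shift, and the invariant measure is $\beta\times\mathrm{Haar}_G$ (its invariance is immediate from translation invariance of $\mathrm{Haar}_G$). This identification, together with its compatibility with the scaling flow $S$, is essentially the content of Hochman's analysis of the scaling sceneries of self similar measures in (\cite{hochman2010dynamics}, Section 4.3), which I would quote.

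For a constant–roof suspension over an ergodic base, $\alpha\in\Sigma(P,S)$ forces $\zeta:=\exp(2\pi i\alpha t_0)$ to be an eigenvalue of $\widehat{T}$; this is the standard correspondence between the pure point spectrum of such a suspension and that of its base. The task thus reduces to bounding the point spectrum of the group extension $\widehat{T}$. Since $G$ is a compact abelian group, I would expand an eigenfunction of $\widehat{T}$ into Fourier modes $\chi\in\widehat{G}$ along the $G$-fibre; matching modes shows that a mode $\chi$ contributes only through a function $\psi_\chi$ on $\Omega$ solving $\psi_\chi\circ\sigma=\zeta\,\chi(\theta_{\omega_1})\,\psi_\chi$. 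The heart of the argument is that such a $\psi_\chi$ must vanish unless the numbers $\chi(\theta_1),\dots,\chi(\theta_l)$ are all equal. Iterating the relation gives $\psi_\chi(\omega)=\zeta^{-n}\,\overline{\chi(\theta_{\omega_1}+\dots+\theta_{\omega_n})}\,\psi_\chi(\sigma^n\omega)$, with $\psi_\chi(\sigma^n\omega)$ depending only on $\omega_{n+1},\omega_{n+2},\dots$. Testing $\psi_\chi$ against a fixed cylinder function $h$ and using independence of the coordinates, one finds that the $n$-independent quantity $\mathbb{E}[\psi_\chi\overline{h}]$ is, for all large $n$, bounded by $C_h\,|w|^{\,n}$, where $w=\sum_k p_k\,\chi(\theta_k)$ satisfies $|w|<1$ exactly when the $\chi(\theta_k)$ are not all equal. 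Letting $n\to\infty$ forces $\psi_\chi=0$. Hence the surviving modes are precisely the characters $\chi$ that are constant on $\{\theta_k\}$, equivalently trivial on the subgroup generated by the differences $\theta_i-\theta_j$, and for these $\zeta=\overline{\chi(\theta_1)}$.

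It remains to translate the condition ``$\chi$ trivial on $\langle \theta_i-\theta_j\rangle$'' into the arithmetic of $k_\Phi$ and $\gamma_\Phi$. If $k_\Phi=\infty$ then some difference $\theta_i-\theta_j$ is an irrational rotation, so $G=SO(2)$ and only the trivial character qualifies; thus $\zeta=1$, $\alpha t_0\in\mathbb{Z}$, and $\alpha\in\mathbb{Z}\cdot\frac{1}{\log r}$ (using $\mathbb{Z}/t_0=\mathbb{Z}\cdot\frac{1}{\log r}$), which is (1). If $k_\Phi<\infty$ the differences generate the cyclic group of order $k_\Phi$, so the admissible characters are exactly those whose index is divisible by $k_\Phi$; evaluating these at $\theta_1$ shows that $\zeta$ ranges over the integer powers of a fixed point of the circle determined by the common rotation $O^{k_\Phi}=R_{\gamma_\Phi}$, and solving $\exp(2\pi i\alpha t_0)=\zeta$ then places $\alpha$ in $\mathbb{Z}\cdot\frac{1}{\log r}\cup\mathbb{Z}\cdot\frac{\gamma_\Phi}{k_\Phi\log r}$, which is (2). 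I expect the genuinely delicate point to be the first paragraph: extracting a clean suspension/skew-product model from Claim \ref{Claim gen ss}, where $\nu$ is only mutually absolutely continuous with a rotated copy of $\mu$ rather than equal to one, and where possible self-similarities of $\mu$ must be accounted for; once the model is in place, the spectral computation above is routine.
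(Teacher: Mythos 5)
Your proposal is correct, and it shares the paper's overall architecture: both realize $P$ (via Hochman's construction, \cite{hochman2010dynamics}, Section 4.3) as a factor of a constant-roof suspension, with roof $\log(1/r)$, over a skew product on $\lbrace 1,\dots,l\rbrace^{\mathbb{N}}\times \overline{G_\Phi}$ carrying a Bernoulli $\times$ Haar measure, and both then bound the point spectrum of that skew product by Fourier expansion along the fibre. Where you genuinely diverge is in the heart of the spectral computation (the paper's Claim \ref{Lemma app}). The paper fixes a mode $g(k,\cdot)$ that is a.s.\ nonvanishing and proves it is a.s.\ \emph{constant}, using Lusin's theorem, passage to the natural (two-sided Bernoulli) extension, and a positive-density-of-good-times argument; constancy then forces $e(\beta)=e(k\alpha(\omega_0))$ for every $\omega_0$. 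You instead kill every mode $\chi$ that is not constant on $\lbrace\theta_k\rbrace$ outright: iterating $\psi_\chi\circ\sigma=\zeta\,\chi(\theta_{\omega_1})\,\psi_\chi$, pairing against a cylinder function $h$ depending on $M$ coordinates, and using independence of the coordinate blocks, the $n$-independent quantity $\mathbb{E}[\psi_\chi\overline{h}]$ factors through $\overline{w}^{\,n-M}\,\mathbb{E}[\psi_\chi]$ with $w=\sum_k p_k\chi(\theta_k)$, and $|w|<1$ (strict convexity of the disk, all $p_k>0$) forces $\psi_\chi=0$. This is shorter, quantitative, and avoids Lusin's theorem and the natural extension entirely, while exploiting the Bernoulli structure in exactly the same place the paper does. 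Two small points you should make explicit: for a surviving mode you still need $\zeta\chi(\theta_1)=1$, which uses that the Bernoulli shift is weakly mixing (so the only shift eigenvalue is $1$ and $\psi_\chi$ is constant); and your final translation honestly yields the group $\frac{1}{\log(1/r)}\left(\mathbb{Z}+\mathbb{Z}\cdot\frac{\gamma_\Phi}{2\pi}\right)$ rather than literally the union displayed in the Proposition --- but this normalization fudge is identical to the one in the paper's own deduction from Claim \ref{Lemma app}, so it is not a defect specific to your argument.
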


The Proposition relies on the following Claim. While it seems standard, we could not find its exact formulation in the literature, so we give it here with full details:
\begin{Claim} \label{Lemma app}
Let $G\leq \mathbb{T}$ be a closed subgroup, and let  $T: \lbrace 0,...,p-1\rbrace^\mathbb{N} \times \mathbb{T} \rightarrow \lbrace 0,...,p-1\rbrace^\mathbb{N} \times \mathbb{T}$ be a skew-product of the form
\begin{equation*}
T(\omega, x) = ( \sigma(\omega), x+\alpha(\omega_0))
\end{equation*}
where $\sigma$ is the shift map, and $\alpha: \lbrace 0,...,p-1\rbrace \rightarrow G$. Let 
\begin{equation*}
k_0 = \min \lbrace k\in \mathbb{N}:\quad  k \alpha(i)=k \alpha(j),\quad \forall i,j\in \lbrace 0,...,p-1\rbrace \rbrace
\end{equation*}
where $k_0 = \infty$ if the set on the RHS is empty. Let $\rho$ be a $T$ invariant measure, which is the product of a (fully supported) Bernoulli measure on $\lbrace 0,...,p-1 \rbrace^\mathbb{N}$ and of the Haar measure on $G$.   Let $\Sigma$ be the pure point spectrum of the system $(\lbrace 0,...,p-1\rbrace^\mathbb{N} \times \mathbb{T}, T, \rho)$
\begin{enumerate}
\item If $k_0 =\infty$  then  $\Sigma = \emptyset$.

\item Otherwise, if there is some $i\in \lbrace 0,...,p-1\rbrace$ with $\alpha(i)\neq 0$ let $\gamma:=k_0 \cdot \alpha(i)$, or else let $\gamma=0$. Then 
\begin{equation*}
\Sigma \subseteq  \mathbb{Z}  \cdot \frac{\gamma}{k_0}
\end{equation*}
\end{enumerate}
\end{Claim}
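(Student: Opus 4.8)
The plan is to diagonalize the fiber dynamics by Fourier analysis on the closed subgroup $G\le\mathbb{T}$, thereby reducing the point-spectrum problem to a multiplicative coboundary problem over the base shift, and then to resolve that coboundary problem directly, exploiting that the base measure is Bernoulli (a product). This bypasses general cocycle theory with an elementary martingale argument.

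First I would take an eigenfunction: suppose $f\in L^2(\rho)$ is nonzero with $f\circ T=\lambda f$, where necessarily $|\lambda|=1$. Expanding $f$ in the characters $\chi\in\widehat{G}$ of the fiber, $f(\omega,x)=\sum_{\chi}f_\chi(\omega)\chi(x)$, the eigenvalue equation separates, for every $\chi$, into $\chi(\alpha(\omega_0))\,f_\chi(\sigma\omega)=\lambda f_\chi(\omega)$. Since $f\neq 0$ some $f_\chi\neq0$; taking moduli and using that $\sigma$ is ergodic for the Bernoulli measure shows $|f_\chi|$ is $\sigma$-invariant, hence a positive constant, so I normalize $|f_\chi|\equiv 1$. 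Setting $h:=f_\chi$ and $b(\omega_0):=\lambda\,\chi(\alpha(\omega_0))^{-1}$ (a unimodular function of the first coordinate), the equation becomes the cohomological relation $h(\omega)=b(\omega_0)^{-1}h(\sigma\omega)$.

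The heart of the matter, and the step I expect to be the main obstacle, is to show such an $h$ can exist only when $b\equiv 1$, i.e.\ when $\chi(\alpha(i))=\lambda$ for all $i$. Iterating the relation gives $h(\omega)=\big(\prod_{j=0}^{k-1}b(\omega_j)^{-1}\big)\,h(\sigma^k\omega)$. Because $\rho$ is a product measure, the coordinates $\omega_0,\dots,\omega_{k-1}$ are independent of $\sigma^k\omega$, so conditioning on the $\sigma$-algebra $\mathcal{F}_{<k}$ they generate yields $\mathbb{E}[h\mid\mathcal{F}_{<k}]=\big(\int h\,d\rho\big)\prod_{j=0}^{k-1}b(\omega_j)^{-1}$, where I used $\int h\circ\sigma^k\,d\rho=\int h\,d\rho$. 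As $k\to\infty$ the martingale $\mathbb{E}[h\mid\mathcal{F}_{<k}]$ converges a.e.\ to $h$; yet its modulus equals the \emph{constant} $|\int h\,d\rho|$, so $|\int h\,d\rho|=|h|=1$. Since $|h|\equiv1$, equality in $|\int h\,d\rho|\le\int|h|\,d\rho$ forces $h$ to be a.e.\ constant, and feeding this back into $h(\omega)=b(\omega_0)^{-1}h(\sigma\omega)$ gives $b\equiv1$. The subtlety this circumvents is that a priori $\int h\,d\rho$ might vanish (then there is no contradiction with $|\mathbb{E}[h\mid\mathcal{F}_{<k}]|$ being small); the \emph{forward} martingale rules this out at once.

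Finally I would translate the constraint $\chi(\alpha(i))=\lambda$ (all $i$) into the stated containment using the structure of closed subgroups of $\mathbb{T}$. The constraint says precisely that $\chi$ is trivial on $H:=\langle \alpha(i)-\alpha(j):i,j\rangle$, whose exponent is by definition $k_0$. If $k_0=\infty$ then $H$ is infinite, so $\overline{H}=G=\mathbb{T}$ and $H$ is dense; a continuous character trivial on a dense set is trivial, whence $\lambda=1$ and the only eigenfunctions are constants, giving conclusion (1). If $k_0<\infty$ then $H$ is finite cyclic of order $k_0$; writing $\chi=\chi_n$ with $\chi_n(x)=e(nx)$, triviality on $H$ forces $k_0\mid n$, and then $\lambda=e(n\alpha(i))=e(s\,k_0\alpha(i))=e(s\gamma)$ with $s=n/k_0\in\mathbb{Z}$, where $\gamma=k_0\alpha(i)$ is independent of $i$ by the definition of $k_0$. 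Hence every eigenvalue lies in $\mathbb{Z}\cdot\gamma\subseteq\mathbb{Z}\cdot\frac{\gamma}{k_0}$, which is (2). The remaining bookkeeping, namely that this exhausts the spectrum because each nonzero Fourier component already produces the constraint, is routine.
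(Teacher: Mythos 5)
Your proposal is correct, and it shares the paper's overall skeleton: expand the eigenfunction in Fourier characters along the fiber, observe that the eigenvalue equation forces a cohomological relation on a nonzero coefficient $h=f_\chi$ of constant modulus, prove that $h$ is a.e.\ constant, and then translate the resulting identity $\chi(\alpha(i))=\lambda$ into the statement via the arithmetic of $k_0$. The genuine difference is in the key step, the constancy of $h$. The paper proves this with a softer but longer argument: Lusin's theorem to get uniform continuity of $g=g(k,\cdot)$ on a set $A$ of large measure, passage to the natural (Rokhlin) extension of the Bernoulli base, and an ergodic-theorem density argument showing that concatenations $\eta_{-n}\dots\eta_0\omega$ and $\eta_{-n}\dots\eta_0\omega'$ of two typical points with a common typical past land in $A$ along a set of times of positive density, which together with the relation that $g(\sigma^n\omega)/g(\omega)$ depends only on the first $n$ digits forces $g(\omega)=g(\omega')$. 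You instead iterate the relation $h(\omega)=\bigl(\prod_{j<k}b(\omega_j)^{-1}\bigr)h(\sigma^k\omega)$, use the product structure of the Bernoulli measure to compute $\mathbb{E}[h\mid\mathcal{F}_{<k}]=\bigl(\prod_{j<k}b(\omega_j)^{-1}\bigr)\int h\,d\mu$ exactly (independence of $\sigma^k\omega$ from the first $k$ coordinates), and then apply L\'evy's upward martingale convergence theorem: the conditional expectations have everywhere-constant modulus $\bigl|\int h\,d\mu\bigr|$ yet converge a.e.\ to $h$ with $|h|\equiv 1$, so $\bigl|\int h\,d\mu\bigr|=\int|h|\,d\mu$ and the equality case of the triangle inequality forces $h$ constant. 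Both routes use the Bernoulli hypothesis essentially, but yours is shorter, self-contained, and avoids Lusin's theorem and the natural extension entirely; it also neatly dispatches the worry that $\int h\,d\mu$ could vanish. Your endgame, phrased through the subgroup $H=\langle\alpha(i)-\alpha(j)\rangle$ whose exponent is $k_0$, is a cleaner formulation of the paper's closing step and in fact yields the slightly stronger containment $\Sigma\subseteq\mathbb{Z}\cdot\gamma$ (mod $1$), which implies the stated $\Sigma\subseteq\mathbb{Z}\cdot\frac{\gamma}{k_0}$.
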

\begin{proof}
Without the loss of generality, let $G=\mathbb{T}$. Let $f$ be an eigenfunction  with eigenvalue $\beta$. Then we have
\begin{equation} \label{Eq eigenfunction}
f(T(\omega,x))= e(\beta) f(\omega,x).
\end{equation}
Now, for a typical $\omega$, $f(\omega,\cdot)$ is a self map of $\mathbb{T}$, so we may write
\begin{equation*}
f(\omega,x) = \sum_{k\in \mathbb{Z}} g(k,\omega) e(kx),
\end{equation*}
where $g(k,\cdot): \lbrace 0,...,p-1\rbrace^\mathbb{N}\rightarrow \mathbb{C}$ (if $G\neq \mathbb{T}$ then it is a finite cyclic group and we obtain a similar representation only with a finite sum). Therefore
\begin{eqnarray*}
f(T(\omega,x)) &=& \sum_{k\in \mathbb{Z}} g(k,\sigma(\omega)) e(k(\alpha(\omega_0)+ x)) \\
& = & \sum_{k\in \mathbb{Z}} g(k,\sigma(\omega)) e(k\alpha(\omega_0))\cdot e(k x)\\
\end{eqnarray*}
Putting the last two displayed equations into \eqref{Eq eigenfunction}, we conclude that for every $k\in \mathbb{Z}$, for almost every $\omega$,
\begin{equation*}
g(k,\sigma(\omega)) e(k\alpha(\omega_0)) = e(\beta) g(k,\omega)
\end{equation*}
Since $f$ is not trivial, it follows that for some  $k\in \mathbb{Z}$,  $g(k,\cdot )\neq 0$ almost surely. Thus,
\begin{equation} \label{Eq conti}
\frac{g(k,\sigma(\omega))}{g(k,\omega)} = e(\beta - k\cdot \alpha(\omega_0)).
\end{equation}

Let $g(\omega):=g(k,\omega)$. We claim that $g(\omega)$ is constant almost surely. Let $\mu$ be the marginal of $\rho$ on the symbolic space. First, we notice that from \eqref{Eq conti} and the ergodicity of $\mu$ it follows that $|g(\cdot )|$ is constant almost surely, so we may assume this constant is $1$. It is yet another  consequence of \eqref{Eq conti} that for any $\omega,\omega' \in \lbrace 0,...,p-1\rbrace^\mathbb{N}$ up to a null set, and any $n\in \mathbb{N}$,
\begin{equation} \label{Eq shift}
\frac{g(\sigma^n(\omega))}{g(\omega)}  = \frac{g(\sigma^n(\omega'))}{g(\omega')} , \quad \text{  if } \omega|_n = \omega'|_n
\end{equation}
where $\omega|_n$ stands for the first $n$ digits of $\omega$. Now, by an application of Lusin's Theorem, for arbitrarily small $\delta>0$ there is a set $A\subseteq \lbrace 0,..., p-1 \rbrace^\mathbb{N}$ such that $\mu(A)> 1- \delta$ and $g(\omega)$ is uniformly continuous on $A$.

We now claim that for $\mu \times \mu$ almost every $(\eta,\omega)\in \lbrace 0,..,p-1\rbrace_{-\infty} ^0 \times \lbrace 0,..,p-1\rbrace^\mathbb{N}$, there is a subset $D_{(\eta,\omega)} \subseteq \mathbb{N}$  of density $\mu(A)$ such that for every $n\in D_{(\eta,\omega)}$
\begin{equation*}
\eta_{-n}....\eta_{-1}\eta_0 \omega \in  A
\end{equation*}
where by $\eta_{-n}....\eta_{-1}\eta_0 \omega$ we mean the concatenation of the finite word $\eta_{-n}....\eta_{0}$ with the (infinite) string $\omega$. Indeed, since $\mu$ is a Bernoulli measure, we may write $\mu=p^\mathbb{N}$ for some non-trivial probability vector. Consider the Rokhlin extension of $(\lbrace 0,..,p-1\rbrace^\mathbb{N}, \sigma, \mu)$, which is given by $(\lbrace 0,..,p-1\rbrace^\mathbb{Z}, \sigma, p^\mathbb{Z})$,  since $\mu$ it is Bernoulli.  Then the claim is a consequence of the ergodic theorem applied to the indicator function of $\lbrace 0,...,p-1 \rbrace_{-\infty} ^0 \times A$, and the map $\sigma^{-1}$.

 Therefore, by Fubini's Theorem, for $\mu$ almost every $\eta\in \lbrace 0,..,p-1\rbrace_{-\infty} ^0 $ there is a set $R_\eta \subseteq \lbrace 0,..,p-1\rbrace^\mathbb{N}$ of full $\mu$ measure such that for every $\omega \in R_\eta$,
\begin{equation} \label{Eq sim}
\eta_{-n}....\eta_{-1}\eta_0 \omega \in  A \quad \text{ for every } n\in D_{(\eta,\omega)}, \text{ which has density } \mu(A).
\end{equation}

Now, fix a typical $\eta$ and  let $\omega, \omega'\in R_\eta$. Assuming $\mu(A)$ is very large, we find that the set 
$$D_{\omega,\omega',\eta}=D_{(\eta,\omega)}\cap D_{(\eta,\omega')} \subseteq \mathbb{N}$$  
where \eqref{Eq sim}  happens simultaneously for $\omega$ and $\omega'$, has positive (lower) density. For any $n\in D_{\omega,\omega',\eta}$ we have that both $\eta_{-n}....\eta_{0}\omega$ and  $\eta_{-n}....\eta_{0} \omega'$ are in $A$. Since $g$ is uniformly continuous on $A$, making use of \eqref{Eq shift} and that $|g(\cdot )|$ is constant, we see that for any such $n$
\begin{equation*}
|g(\eta_{-n}....\eta_{-1}\omega) - g(\eta_{-n}....\eta_{0}\omega')| = o(n) \Longrightarrow |g(\omega) - g(\omega')| = o(n).
\end{equation*}
Since $D_{\omega,\omega',\eta}$  has lower positive density it is infinite. It follows that $g(\omega)=g(\omega')$ for all $\omega,\omega' \in R_\eta$.  Since $\mu (R_\eta)=1$, we conclude that $g$ is almost surely constant.

Finally, by \eqref{Eq conti}, $\beta - k\alpha(\omega_0)=0 \mod 1$ for every $\omega_0$. This is sufficient for the Claim.
 \end{proof}
 
\textbf{Proof of Proposition \ref{Claim spec ss}} We first (partially) recall the construction of $P$, carried out in (\cite{hochman2010dynamics}, Section 4.3). Let 
\begin{equation*}
\Phi = \lbrace \phi_k(x)=r \cdot O_k\cdot x+t_k \rbrace_{k=0} ^{q-1}
\end{equation*}
be the underlying IFS, where $0<r<1$, $O_k$ are $2\times 2$ rotation matrices for every $k$, and $t_k\in \mathbb{R}^2$. Let $X$ be the attractor of $\Phi$. By (\cite{hochman2010dynamics}, Section 4.3) we may assume that there is an open set $A$ such that $X\subseteq A$ and $\phi_k (A) \subset A$ are pairwise disjoint for every $k$. In addition, let $F_\Phi:\lbrace0,..,q-1\rbrace^\mathbb{N}\rightarrow X$ be the usual (continuous and onto) coding map 
\begin{equation*}
F_\Phi(\omega) = \lim_{k\rightarrow \infty} \phi_{\omega_1}  \circ \phi_{\omega_2}\circ ...\circ \phi_{\omega_n} (0).
\end{equation*}
Then it is well known that our self similar measure $\mu$ admits a unique $F_\Phi
$-lift $\tilde{\mu}\in \mathcal{P}(\lbrace0,...,q-1\rbrace^\mathbb{N})$ where $\tilde{\mu}$ is a stationary Bernoulli measure \cite{hochman2010dynamics}. We also recall that $G_\Phi$ is the group generated by the $O_k$'s. Let $\rho$ denote the Haar measure on $\overline{G_\Phi}$. Notice that $\overline{G_\Phi}$ is either a finite cyclic group or $SO(\mathbb{R}^2)$, which is isomorphic to $\mathbb{T}$.

Consider the distribution $Q$ on $ \mathbb{R}^2 \times \overline{G_\Phi}$ defined as follows: We choose $y\sim \mu$ and $U \sim \rho$ independently, and consider $(Uy, U)$. We now define a map $M:\supp(Q)\rightarrow \supp(Q)$. Let $(y,V)\in \supp(Q)$ be such that  $V^{-1} (y) \in \supp (\mu)$. We then define $M(y,V) = (y',V')$ by choosing the unique index such that $V^{-1}(y)\in \phi_i (A)$, and then taking
\begin{equation*}
 y' = V\phi_i ^{-1} V^{-1} y,\quad V'=O_i V
\end{equation*}
One may verify that $M$ is well defined $Q$ almost surely, and that  $Q$ is $M$ invariant.

Let $G:= \overline{G_\Phi}$.  Notice that the system $(\supp(Q),Q,M)$ is a factor of the system \newline $(\lbrace 0,...,q-1\rbrace^\mathbb{N} \times G, T, \tilde{\mu} \times \rho)$ where $T$ is a skew-product as in Lemma \ref{Lemma app}, with $\alpha(i)$ arising from $O_i$, the rotation part of $\phi_i$.  Indeed,    a factor map  is given by
\begin{equation*}
(\omega, t)\mapsto (e(t) F_\Phi(\omega),\quad  e(t)),\quad \text{where } e(t) \text{ is the rotation by the angle } t.
\end{equation*}

Finally, the EFD $P$ that $\mu$  generates arises as follows: it is a factor of the suspension by a function of constant height $\log r$ (recall that $r$ is the uniform contraction ratio associated with this IFS) of a factor of $(\supp(Q),Q,M)$. The latter system is a factor of $(\lbrace 0,...,q-1\rbrace^\mathbb{N} \times G, T, \tilde{\mu} \times \rho)$. By the preceding discussion and Claim \ref{Lemma app}, the pure  point spectrum of $(P,S)$ satisfies the claimed conditions, where the $\log r$ factor arises from taking the suspension  (\cite{hochman2010geometric}, Section 3.5).

\subsection{Proof of Theorem \ref{Theorem ss}}
We now fix a measure $\mu$ as in Theorem \ref{Theorem ss}, and recall that $P$ is the EFD that $\mu$ generates. The following Claim is the key to the proof of Theorem \ref{Theorem ss}.

\begin{Claim}  \label{Claim dis for ss}
 There exists an orthogonal projection $\pi:\mathbb{R}^2 \rightarrow \mathbb{R}$ with $\pi \notin \lbrace P_1,P_2 \rbrace$ such that:
\begin{enumerate}

\item There is a non-trivial EFD $Q$ such that almost every conditional measure $\mu_{\pi^{-1} (x)}$ generates $Q$. Moreover, $(Q,S)$ is a factor of $(P,S)$, where $S$ is the scaling flow on $\mathcal{M}^\square$.

\item For almost every conditional measure $\mu_{\pi^{-1} (x)}$,  $P_2 \mu_{\pi^{-1} (x)}$ is pointwise generic under $T_n$ for $\lambda$.
\end{enumerate} 
\end{Claim}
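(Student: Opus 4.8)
The plan is to produce the projection $\pi$ first, and then verify items (1) and (2) by combining the factor structure of the EFD $P$ (Theorem \ref{Prop of EFD}) with the spectral computation of Proposition \ref{Claim spec ss}. For the choice of $\pi$ I would split on hypothesis (2) of Theorem \ref{Theorem ss}: if $|G_\Phi|<\infty$, then Farkas' Theorem \ref{Theorem farkas} supplies a non-principal $\pi\neq P_1,P_2$ with $\dim\pi\mu<\dim\mu$; if instead $\dim\mu>1$, the Hochman--Shmerkin dimension formula (\cite{hochman2009local}, Theorem 1.6) gives $\dim\pi\mu=\min\{\dim\mu,1\}=1<\dim\mu$ for \emph{every} projection, so I may take $\pi$ to be any non-principal projection. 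Either way I fix a non-principal $\pi$ with $\dim\pi\mu<\dim\mu$. Since $\mu$ is exact dimensional and $\pi\mu$ is exact dimensional (linear images of self similar measures being exact dimensional), the dimension conservation formula (Theorem \ref{Prop of EFD}(2), cf. \cite{algom2019simultaneous}, Lemma 2.2) yields $\dim\mu_{\pi^{-1}(x)}=\dim\mu-\dim\pi\mu>0$ for $\mu$-a.e. $x$; this positivity is what will force every EFD appearing below to be non-trivial.

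For item (1) I set $Q:=P_{\pi^{-1}(0)}$. By Theorem \ref{Prop of EFD}(4) this is an EFD and a factor of $(P,S)$, and for $P$-a.e. $\nu$ the conditionals $\nu_{\pi^{-1}(x)}$ generate $Q$. The point needing care is the passage from $P$-typical $\nu$ to the specific measure $\mu$: here I would use that $\mu$ generates $P$, so that its conditional measures are themselves generating the factor EFD $Q$. The comparison in Claim \ref{Claim gen ss}, namely that $P$-typical measures coincide with $\mu$ up to a rotation and bounded densities, is exactly what makes this transfer legitimate, since generation of an EFD and the disintegration operation are unaffected by bounded-density changes and transform equivariantly under rotations (in the finite-group case the identity rotation carries positive Haar mass, while in the $\dim\mu>1$ case the freedom in the direction of $\pi$ lets me absorb the Haar-random rotation). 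Non-triviality of $Q$ is then immediate from $\dim\mu_{\pi^{-1}(x)}>0$, since a trivial EFD is supported on $\delta_0$ and has dimension $0$.

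For item (2) I observe that, because $\pi\neq P_1,P_2$, the fibers $\pi^{-1}(x)$ are affine lines that are neither vertical nor horizontal, so $P_2$ restricted to such a fiber is an affine isomorphism. Hence $P_2\mu_{\pi^{-1}(x)}$ is an affine image of $\mu_{\pi^{-1}(x)}$; it generates a one dimensional EFD $\tilde Q$ that is an affine copy, in particular a factor, of $Q$, so $\Sigma(\tilde Q,S)\subseteq\Sigma(Q,S)\subseteq\Sigma(P,S)$, while $\dim\tilde Q=\dim\mu-\dim\pi\mu>0$. Proposition \ref{Claim spec ss} then controls $\Sigma(P,S)$: it lies in $\mathbb{Z}\cdot\frac{1}{\log r}$, together with $\mathbb{Z}\cdot\frac{\gamma_\Phi}{\log r\cdot k_\Phi}$ when $k_\Phi<\infty$. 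Since $n\not\sim r$ forces $\mathbb{Z}\cdot\frac{1}{\log n}\cap\mathbb{Z}\cdot\frac{1}{\log r}=\{0\}$, and hypothesis (3) of Theorem \ref{Theorem ss} gives $\frac{q}{\log n}\notin\mathbb{Z}\cdot\frac{\gamma_\Phi}{\log r\cdot k_\Phi}$ for every integer $q\neq 0$, no non-zero integer multiple of $\frac{1}{\log n}$ lies in $\Sigma(\tilde Q,S)$. Feeding the non-trivial EFD $\tilde Q$ and this spectral gap into the one dimensional Hochman--Shmerkin theorem (\cite{hochmanshmerkin2015}, Theorem 1.4) shows that $P_2\mu_{\pi^{-1}(x)}$ is pointwise generic for $\lambda$ under $T_n$, for $\mu$-a.e. $x$, which is item (2).

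The main obstacle I anticipate is the transfer in item (1): rigorously deducing that the conditionals of the \emph{specific} measure $\mu$ (rather than of $P$-typical $\nu$) generate $Q$, and, in the infinite-group case, arranging the direction of $\pi$ to be compatible with the Haar-random rotations furnished by Claim \ref{Claim gen ss}. A secondary technical point is verifying that passing from the two dimensional EFD $Q$ to its one dimensional $P_2$-image $\tilde Q$ neither enlarges the pure point spectrum nor destroys non-triviality, since it is this comparison that lets Proposition \ref{Claim spec ss} be applied verbatim.
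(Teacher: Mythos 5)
Your proposal follows the same skeleton as the paper's proof: Farkas' Theorem in the finite-group case and the projection theorem when $|G_\Phi|=\infty$ (hence $\dim\mu>1$), then $Q:=P_{\pi^{-1}(0)}$ with the factor property from Theorem \ref{Prop of EFD}(4), transfer to $\mu$ via Claim \ref{Claim gen ss}, and item (2) from $\Sigma(Q,S)\subseteq\Sigma(P,S)$ plus Proposition \ref{Claim spec ss} and the Hochman--Shmerkin genericity criterion. The spectral half of your argument is essentially correct (modulo citing their Theorem 1.4, which concerns self-similar measures, where you want the general criterion that a measure generating a non-trivial EFD with the stated spectral gap is pointwise generic for $\lambda$). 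But there is a genuine gap at the step ``the dimension conservation formula (Theorem \ref{Prop of EFD}(2), cf. \cite{algom2019simultaneous}, Lemma 2.2) yields $\dim\mu_{\pi^{-1}(x)}=\dim\mu-\dim\pi\mu>0$.'' What you need is the \emph{lower} bound $\dim\mu_{\pi^{-1}(x)}\geq\dim\mu-\dim\pi\mu$ for the measure $\mu$ itself, and neither citation gives it: Theorem \ref{Prop of EFD}(2) is a statement about $P$-\emph{typical} measures $\nu$, and $\mu$ need not be typical for the distribution it generates; Lemma 2.2 of \cite{algom2019simultaneous} is the opposite, easy inequality (the paper uses it to conclude $\dim\mu_{\pi^{-1}(x)}=0$ when $\dim\pi\mu=\dim\mu$), which gives nothing positive.

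In the finite-group case your statement is true but must be justified by dimension conservation for self-similar measures with finite rotation group (\cite{furstenberg2008ergodic}, \cite{Falconer2014Jin}), which is what the paper invokes. In the infinite-group case no such result for $\mu$ is available, and this is precisely where the paper's proof does its real work: it applies conservation to a $P$-typical $\nu$ (where Theorem \ref{Prop of EFD}(2) \emph{does} apply), pulls the conclusion back to $\mu$ through the equivalence $U\mu\sim_C\nu|_{B}$ of Claim \ref{Claim gen ss}, and accepts that the projection it ends up with is the composition $\pi\circ U$ with $U$ the Haar-random rotation (non-principality of $\pi\circ U$ being arrangeable exactly because $U$ is Haar-distributed); correspondingly $Q$ becomes the rotated copy $U^{-1}P_{\pi^{-1}(0)}$, still a factor of $P$. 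So you cannot fix an arbitrary non-principal $\pi$ in advance, as your proposal does. The same mechanism is what makes your item (1) rigorous: ``$\mu$ generates $P$, so its conditionals generate the factor EFD $Q$'' is not a valid implication on its own; it must pass through Claim \ref{Claim gen ss} and the stability of generation under bounded-density equivalence and similarities (\cite{hochman2010dynamics}, Proposition 1.9). You gesture at both repairs (``absorb the Haar-random rotation,'' ``bounded-density changes''), but you explicitly file them under ``main obstacle I anticipate'' rather than resolving them---and these are exactly the points on which the paper's proof turns.
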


\textbf{Proof of Theorem \ref{Theorem ss} assuming Claim \ref{Claim dis for ss}} Let $\pi$ be the orthogonal projection from Claim \ref{Claim dis for ss}. Then  we see via Claim \ref{Claim dis for ss} that typical conditional measures with respect to $\pi$ satisfy the requirements of  Theorem \ref{Theorem lines}. Indeed, since $(Q,S)$ is a factor of $(P,S)$,   the spectrum of the EFD $Q$ from Claim \ref{Claim dis for ss} part (2) satisfies $ \Sigma(Q,S) \subseteq \Sigma (P,S).$
Therefore $\Sigma(Q,S)$ does not contain a non-zero integer multiple of $\frac{1}{\log m}$ by Claim \ref{Claim spec ss} and our assumptions on $m,r,\gamma_\Phi,k_\phi$.
 Thus, the Theorem follows via an application of  Theorem \ref{Theorem lines} to the typical conditional measures $\mu_{\pi^{-1} (x)}$, and since 
 $$\mu = \int \mu_{\pi^{-1} (x)} d \pi \mu (x). $$ 
 
 \textbf{Proof of Claim \ref{Claim dis for ss}} We begin by producing a projection $\pi \notin \lbrace P_1,P_2 \rbrace$ such that typical conditional measures $\mu_{\pi^{-1} (x)}$ have positive dimension.  Assume first that $|G_\Phi|<\infty$. Then by dimension conservation (\cite{furstenberg2008ergodic}, \cite{Falconer2014Jin}), for every projection $\pi:\mathbb{R}^2 \rightarrow \mathbb{R}$ we have
\begin{equation} \label{Eq dim conservation}
\dim \mu = \dim \pi \mu + \dim \mu_{\pi^{-1} (x)},\quad \text{ almost surely.}
\end{equation}
If $\dim \mu \leq 1$ then by Theorem \ref{Theorem farkas}  there exists an orthogonal projection $\pi:\mathbb{R}^2 \rightarrow \mathbb{R}$ such that $\dim \pi \mu < \dim \mu \leq 1$ and  $\pi \notin \lbrace P_1,P_2 \rbrace$. By \eqref{Eq dim conservation}, we obtain that typical conditional measures with respect to $\pi$ have positive dimension. If $\dim \mu>1$ then, via \eqref{Eq dim conservation}, every non-principal projection will work.

Assume now that $|G_\Phi|=\infty$ and that $\dim \mu >1$. Let $P$ be the EFD generated by $\mu$. By Theorem \ref{Prop of EFD}, $P$ almost every $\nu$ is exact dimensional with $\dim \nu = \dim \mu$. Also,  for every projection $\pi:\mathbb{R}^2 \rightarrow \mathbb{R}$, for $P$ almost every $\nu$
\begin{equation} \label{Eq dim conservation2}
\dim \nu = \dim \pi \nu + \dim \nu_{\pi^{-1} (x)},\quad \text{ almost surely.}
\end{equation}
Let $\nu$ be such a $P$ typical measure. Then, by Claim \ref{Claim gen ss}, there is some $C>0$ and an element $U\in SO(\mathbb{R}^2)$ such that $U\mu \sim_C \nu|_{B}$, where $B$ is some ball. In particular, typical $\pi$-conditional measures of these measure are also equivalent. Since typically 
$$(U \mu)_{\pi^{-1}(x)} = U(\mu_{{(U\circ \pi)}^{-1} (x)})$$
we obtain via \eqref{Eq dim conservation2} and exact-dimensionality of these conditional measures that
\begin{equation*}
\dim \mu = \dim (\pi \circ U)\mu + \dim \mu_{{(U\circ \pi)}^{-1} (x)},\quad \text{ almost surely.}
\end{equation*} 
Since $\dim \mu >1$, we may take $\pi\circ U$ as our desired projection. It is clear that we may assume $\pi \circ U \neq P_1,P_2$,  since $U$ is distributed according to the Haar (Lebesgue) measure.

So far we have produced a projection $\pi$ such that typical conditional measures $\mu_{\pi^{-1} (x)}$ have positive dimension. Next,  let $P_{\pi^{-1}(0)}$ denote the push-forward of $P$ via the map $\mu \mapsto \mu_{\pi^{-1}(0)}$. By Theorem \ref{Prop of EFD} the distribution $P_{\pi^{-1}(0)}$ is an EFD that is a factor of $P$, and for $P$ almost almost every $\nu$, almost every conditional measure $\nu_{\pi^{-1}(x)}$  generates $P_{\pi^{-1}(0)}$.

If $|G_\Phi|<\infty$  then by Claim \ref{Claim gen ss} there is some $C>0$ such that  $\mu \ll_C (h \nu)|_B$ (i.e. the density is bounded by $C$), where $\nu$ is $P$ typical, $B$ is some ball, and  $h$ is a \textit{homothety}. It follows that almost every conditional measure of $\mu$ with respect to $\pi$ is absolutely continuous with respect to the corresponding conditional measures of $(h \nu)|_B$. So, the $\pi$-conditionals of $\mu$ generate the same EFD that the $\pi$-conditionals of $(h \nu)|_B$ generate, which in turn is the same EFD that the $\pi$-conditionals of $\nu$ generate, which is $P_{\pi^{-1}(0)}$ (here we use that $h$ is a homothety and apply (\cite{hochman2010dynamics}, Proposition 1.9)).  Thus, if $|G_\Phi|<\infty$ we take $Q=P_{\pi^{-1}(0)}$ which is a factor of $P$, and is non-trivial since it is generated by typical conditional measures $\mu_{\pi^{-1} (x)}$, and they have positive dimension (\cite{hochman2010dynamics}, Proposition 1.19).

If $|G_\Phi|=\infty$ then (up to a restriction to some ball) $U\mu \sim_C \nu$, where $\nu$ is the same measure we worked with when we produced $\pi$. It again follows that almost every conditional measure of $\mu$ with respect to $U\circ \pi$ is absolutely continuous with respect to the corresponding conditional measures of $\nu$ with respect to $\pi$, up to pushing forward via $U$. Therefore, $\pi\circ U$ typical conditional measures generate the  EFD $U^{-1} P_{\pi^{-1}(0)}$ via (\cite{hochman2010dynamics}, Proposition 1.9). Recalling that in this situation our projection was defined as the composition of $\pi$ and $U$, this yields the desired result. Thus, if $|G_\Phi|<\infty$ we take $Q=U^{-1} P_{\pi^{-1}(0)}$. Notice that this is a factor of $P_{\pi^{-1}(0)}$ by (\cite{hochman2010dynamics}, Section 7.1), and thus also a factor of $P$. Also, $Q$ is non-trivial since it is generated by typical conditional measures, that have positive dimension. This concludes the first part of the Claim.

For the second part of the Claim, we recall almost surely the measure $\mu_{\pi^{-1} (x)}$ generates   $Q$. Moreover, $Q$ is non trivial. Since $Q$ is a factor of $P$,  $\Sigma(Q,S) \subseteq \Sigma(P,S)$, so $\Sigma(Q,S)$ does not contain any integer multiple of $\frac{1}{\log n}$.   It follows from (\cite{hochman2010dynamics}, Proposition 1.9) that $P_2 \mu_{\pi^{-1} (x)}$ also generates a non-trivial EFD such that its pure point spectrum does not contain any integer multiple of $\frac{1}{\log n}$. It now follows from the main result of \cite{hochmanshmerkin2015} that $P_2 \mu_{\pi^{-1} (x)}$ is pointwise generic under $T_n$ for $\lambda$.

\section{On the proof of Theorem \ref{Theorem inv}}
Let $\mu$ be a $T_p \times T_p$ invariant measure as in Theorem \ref{Theorem inv}. In particular, we assume that there exists a projection $\pi:\mathbb{R}^2 \rightarrow \mathbb{R}$ with $\pi\neq P_1,P_2$ such that $\dim \pi \mu < \dim \mu.$ We will (occasionally) make use of the standard identification of the map $T_p \times T_p$ on $[0,1]^2$ with the shift map on \newline $(\lbrace0,...,p-1\rbrace \times \lbrace0,...,p-1 \rbrace) ^\mathbb{N}$, given by the base $p$ expansion (in dimension $2$). This is defined uniquely off a countable set of horizontal and vertical lines, and by combining the condition $\dim \pi \mu < \dim \mu$ with \eqref{Eq dim conservation}, that holds for $\mu$ by \cite{furstenberg2008ergodic}, one may verify that it is an isomorphism almost surely.  
\subsection{Disintegration of conditional measures}
In this section we find a disintegration of typical conditional measures  $\mu_{\pi^{-1}(x)}$ into conditional measures that almost surely satisfy the conditions of Theorem \ref{Theorem lines}. Recall that by Theorem \ref{EFD exp}  $\mu$ generates a non-trivial EFD $P$. Given our orthogonal projection $\pi:\mathbb{R}^2 \rightarrow \mathbb{R}$,  let $P_{\pi^{-1}(0)}$ be the EFD as in Theorem \ref{Prop of EFD} part (3).
\begin{Claim} \label{Claim dis}
There exists a family of distributions $\lbrace P_x \rbrace_{x\in \supp (\pi \mu)} \subseteq \mathcal{P}(\mathcal{P}([0,1]^2))$ such that:
\begin{enumerate}
\item For $\pi \mu$ almost every $x$, $P_x$ almost every measure $\nu$ generates the EFD  $P_{\pi^{-1}(0)}$.

\item For $\pi \mu$ almost every $x$ we may disintegrate the conditional measure $\mu_{\pi^{-1} (x)}$ as
\begin{equation*}
\mu_{\pi^{-1} (x)} = \int \nu  dP_x(\nu)
\end{equation*}

\end{enumerate}
\end{Claim}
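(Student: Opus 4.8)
The plan is to realize each conditional measure $\mu_{\pi^{-1}(x)}$ as a mixture of measures that, at small scales, look like the $\pi$-conditionals of $P$-typical measures, for which Theorem \ref{Prop of EFD}(4) already guarantees generation of $P_{\pi^{-1}(0)}$. The geometric engine is a commutation identity between conditioning along $\pi$-fibres and the scaling flow. Since $\pi$ is linear, the fibres $\pi^{-1}(\cdot)$ are parallel lines, and the isotropic scaling $S_t$ maps lines through the origin to themselves; hence for $\mu$-a.e.\ $z$ and every $t$, up to the normalizing constants built into $S_t$ and into conditional measures,
\[
(\mu_{z,t})_{\pi^{-1}(0)} \;=\; \bigl(\mu_{\pi^{-1}(\pi z)}\bigr)_{z,t}.
\]
In words, conditioning a magnification of $\mu$ onto the fibre through its centre equals magnifying, at that centre, the conditional measure living on the fibre through $z$. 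This reduces all questions about the sceneries of the one-dimensional measures $\mu_{\pi^{-1}(x)}$ to questions about the $\pi$-conditionals of the sceneries $\mu_{z,t}$ of $\mu$ itself.

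I would then record the measurable structure coming from the EFD $P$ that $\mu$ generates (Theorem \ref{EFD exp}). By Theorem \ref{Prop of EFD}(3)--(4) the map $\Pi\colon \nu \mapsto \nu_{\pi^{-1}(0)}$ is defined $P$-a.e., is measurable, and pushes $P$ forward to the EFD $P_{\pi^{-1}(0)}$, a factor of $P$ generated by the $\pi$-conditionals of $P$-typical measures. Because $\mu$ generates $P$, for $\mu$-a.e.\ $z$ the scenery $\mu_{z,t}$ equidistributes for $P$; disintegrating $\mu = \int \mu_{\pi^{-1}(x)}\,d\pi\mu(x)$ and applying Fubini, the same equidistribution holds for $\pi\mu$-a.e.\ $x$ and $\mu_{\pi^{-1}(x)}$-a.e.\ $z$. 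To produce the family $\{P_x\}$ I would \emph{not} try to transport the scenery limit through $\Pi$ pointwise; instead I would use the $P$-a.e.\ defined measurable conditional structure to define $P_x$ as the distribution of the local models of $\mu_{\pi^{-1}(x)}$ carried by its scenery. Property (1) is then a consequence of Theorem \ref{Prop of EFD}(4) applied to the $P$-typical measures appearing there, while the barycentre identity $\mu_{\pi^{-1}(x)} = \int \nu\,dP_x(\nu)$ of property (2) is obtained by unwinding the disintegration $\mu = \int \mu_{\pi^{-1}(x)}\,d\pi\mu(x)$ together with the commutation identity above; here one invokes dimension conservation \eqref{Eq dim conservation}, valid for $\mu$ by \cite{furstenberg2008ergodic}, to ensure that the conditionals and the base-$p$ symbolic coding are well defined almost everywhere.

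The main obstacle is the discontinuity of $\Pi$ in the weak topology: equidistribution of $\mu_{z,t}$ for $P$ does not transfer verbatim to equidistribution of the conditionals $\Pi(\mu_{z,t})$ for $P_{\pi^{-1}(0)}$, because $\Pi$ is only known to be measurable on $\supp P$, not continuous on a set of full $P$-measure. This is precisely the reason a genuine mixture is needed here, in contrast with the self-similar case of Claim \ref{Claim dis for ss}: there Claim \ref{Claim gen ss} controls $\mu$ by a $P$-typical measure up to a similarity and a bounded density, which lets one pass to pointwise generation, whereas in the present setting $\mu$ is only assumed to \emph{generate} $P$, so no single conditional $\mu_{\pi^{-1}(x)}$ need generate $P_{\pi^{-1}(0)}$ and $P_x = \delta_{\mu_{\pi^{-1}(x)}}$ is not available. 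The delicate step is therefore to control $\Pi$ well enough to assemble a coherent $P_x$ with the exact barycentre: I would lean on the quasi-Palm property of $P$ (Theorem \ref{Theorem 4.7}) and the $P$-a.e.\ existence and exact-dimensionality of conditional measures (Theorem \ref{Prop of EFD}(2)--(3)) to show that the family $\{P_x\}$ built from the measurable conditional structure is consistent with the disintegration of $\mu$ over $\pi$, and that its fibrewise elements indeed generate $P_{\pi^{-1}(0)}$.
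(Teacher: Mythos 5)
There is a genuine gap: you never actually construct $P_x$, and the route you propose cannot deliver property (2). Your phrase ``the distribution of the local models of $\mu_{\pi^{-1}(x)}$ carried by its scenery'' is not a definition. If it means accumulation points of the scenery of $\mu_{\pi^{-1}(x)}$, the argument is circular --- nothing yet guarantees that these sceneries equidistribute for anything (indeed, a single conditional need not generate any distribution, which is exactly why a mixture is needed, as you yourself observe). Worse, scenery limits are structurally the wrong objects for the barycentre identity: they are blow-up limits living in $\mathcal{M}^\square$, normalized restrictions of magnified translates, and there is no mechanism by which such limits average back to the measure one started from. Property (2) demands that the components $\nu$ of $P_x$ be ``global'' pieces of $\mu$ itself, supported on the fibre $\pi^{-1}(x)$ at the original scale, not asymptotic local models. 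The quasi-Palm property (Theorem \ref{Theorem 4.7}) cannot bridge this either: it is a property of $P$, i.e.\ of the typical measures of the limiting distribution, and it says nothing about how the conditionals of the fixed measure $\mu$ relate to the conditionals of $P$-typical measures.

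The missing idea, which is the heart of the paper's proof, is to use the symbolic structure of $T_p\times T_p$-invariant measures rather than the abstract scenery formalism. One passes to the natural extension $\tilde{\mu}$ of the shift representation of $\mu$ and disintegrates $\mu$ over its past, $\mu = \int \nu_\omega \, d\tilde{\mu}(\omega)$ as in \eqref{Eq well defined} (\cite{hochman2010geometric}, Theorem 3.1). The components $\nu_\omega$ are measures on $[0,1]^2$, and one defines $P_x$ to be the law of $(\nu_\omega)_{\pi^{-1}(x)}$ given $x$, where $\omega\sim\tilde{\mu}$ and then $x\sim\pi\nu_\omega$. With this concrete definition, property (2) is a short Fubini computation starting from $\mu=\int\nu_\omega\,d\tilde{\mu}(\omega)$, and property (1) follows because $P$ itself arises as the suspension $P=\int_0^1 S_{t\log p}\bigl(\mathrm{cent}_0(\tilde{\mu})\bigr)\,dt$ (\cite{hochman2010geometric}, Proposition 3.6), so that $P_{\pi^{-1}(0)}$ coincides with the law of scaled, translated conditionals of the $\nu_\omega$'s (\cite{hochman2010dynamics}, Claim 6.12 and the preceding lemmas); since $P_{\pi^{-1}(0)}$ is an EFD it is generated by its own typical measures, and these have the same asymptotic sceneries as the $(\nu_\omega)_{\pi^{-1}(x)}$. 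Your commutation identity between scaling and conditioning is in the right spirit (it is implicit in the references just cited), but without the natural-extension disintegration supplying the components $\nu_\omega$, it has nothing to act on, and the construction of $\{P_x\}$ never gets off the ground.
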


\begin{proof}
We first recall the explicit construction of the EFD that $\mu$ generates (\cite{hochman2010geometric}, Section 3.2). We identify our measure $\mu$ with an ergodic shift invariant measure $\mu$ on 
$$\lbrace 0,...,p-1\rbrace^\mathbb{N}\times \lbrace 0,...,p-1\rbrace^\mathbb{N}.$$
Its natural extension is a shift invariant measure $\tilde{\mu}$ on 
$$\lbrace 0,...,p-1\rbrace^\mathbb{Z}\times \lbrace 0,...,p-1\rbrace^\mathbb{Z},$$
 that projects to $\mu$ on the positive coordinates $\lbrace (i,j): i,j\geq 0 \rbrace$. Then we may disintegrate the measure $\mu$ according to its past, i.e. 
\begin{equation} \label{Eq well defined}
\mu = \int \nu_\omega d\tilde{\mu}(\omega),
\end{equation} 
where  $\nu_\omega$ is the distribution of $\sum_{i=1} ^\infty p^{-i} \omega_i$ given $(\omega_i)_{i\leq 0}$, see (\cite{hochman2010geometric}, Theorem 3.1). We remark that $\tilde{\mu}$ serves as a discrete analogue of $P$, and indeed $P$ arises by considering a certain suspension flow related to $\tilde{\mu}$, a fact that we shall make use of in this proof.

Now, recall that we are given an orthogonal projection $\pi$. Suppose without the loss of generality that $\pi([0,1]^2)\subseteq [-1,1]$. We disintegrate the measure $\mu$ as follows: consider the  measure valued map $[-1,1]^2 \times \supp(\tilde{\mu}) \rightarrow \mathcal{P}([-1,1]^2)$ defined (almost surely) by
\begin{equation*}
(x,\nu_\omega) \mapsto (\nu_\omega)_{\pi^{-1}(x)},
\end{equation*}
where we first draw $\nu_\omega$ according to $\tilde{\mu}$ and then draw $x$ according to $\pi \nu$. Now, fix a typical $x$, and let $P_x$ denote the distribution  of $ (\nu_\omega)_{\pi^{-1}(x)}$ given $x$. We remark that by \eqref{Eq well defined} the distribution of $x$ is given by $\pi \mu$.

Next, as in \cite{hochman2010dynamics}, let $\text{cent}_0(\tilde{\mu})$ be the distribution defined by first choosing a measure $\nu_\omega$ according to $\tilde{\mu}$, then drawing a $\nu_\omega$ typical point $x$, and then looking at the translated measure $ \nu_\omega ^x$ (so that $0 \in \supp(\nu_\omega ^x)$). By (\cite{hochman2010geometric}, Proposition 3.6) $P$ arises as
\begin{equation} \label{Eq P }
P = \int_0 ^1 S_{t \log p}  \left( \text{cent}_0(\tilde{\mu}) \right) dt.
\end{equation}
Also, recall that $P_{\pi^{-1} (0)}$ is defined by first drawing $\nu$ according to $P$ and then looking at the conditional measure $(\nu)_{\pi^{-1} (0)}$.  This is the same distribution as the one we get by considering measures of the form $(S_t  \nu^y _\omega)_{\pi^{-1}(0)}$, where we first draw $\nu_\omega$ according to $\tilde{\mu}$, then $y$ according to $\nu_\omega$, then $t$ according to Lebesgue, and finally condition on $\pi^{-1}(0)$. It follows from  (\cite{hochman2010dynamics}, Claim 6.12 and the preceding Lemmas)    that this is the same distribution    as the one on measures of the form $S_t \left( (\nu_\omega)_{\pi^{-1} (x)} \right)^y$, where we first draw $x$ according to $\pi \mu$, and then the conditional measure $(\nu_\omega)_{\pi^{-1}(x)}$ according to $P_x$,  translate by random $y$ in this fiber, and scale by a Lebesgue typical $t$.

Now, since $P_{\pi^{-1} (0)}$ is an EFD, it is generated by $P_{\pi^{-1} (0)}$ almost every $\nu$. We have just shown that these measures have the same asymptotic scaling sceneries as $(\nu_\omega)_{\pi^{-1}(x)}$, where we first draw $x$ according to $\pi \mu$ and then the conditional measure $(\nu_\omega)_{\pi^{-1}(x)}$ according to $P_x$. So, for $\pi \mu$ almost every $x$, $P_x$  almost every $\nu$ generates $P_{\pi^{-1} (0)}$. This concludes the proof of the first part of the Claim.

For the second part, we use a Fubini argument. By \eqref{Eq well defined} and by the definition of $P_x$,
\begin{eqnarray*}
\mu &=&  \int \nu_\omega d\tilde{\mu}(\omega)\\
&=& \int \int (\nu_\omega)_{\pi^{-1} (x)} d\pi \nu_\omega (x) d \tilde{\mu}(\omega) \\
&=& \int \int  (\nu_\omega)_{\pi^{-1} (x)} d P _x (\omega) d(\int \pi \nu_\omega  d\tilde{\mu}(\omega) )(x) \\
&=& \int \int (\nu_\omega)_{\pi^{-1} (x)} d P_x (\omega) d\pi \mu (x)
\end{eqnarray*} 
so that almost surely, 
\begin{equation*}
\mu_{\pi^{-1} (x)} = \int \nu d P_x (\nu). 
\end{equation*}
\end{proof}

The following Claim is where the assumption on $\pi$ from \eqref{Equation condition} is used:
\begin{Claim} \label{Claim proj inv}
For our projection $\pi:\mathbb{R}^2 \rightarrow \mathbb{R}$ the EFD $P_{\pi^{-1}(0)}$ is not trivial.
\end{Claim}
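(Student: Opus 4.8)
The plan is to show that $P_{\pi^{-1}(0)}$-typical measures have strictly positive dimension; once this is established, the non-triviality of $P_{\pi^{-1}(0)}$ follows immediately from (\cite{hochman2010dynamics}, Proposition 1.19), exactly as in the proof of Claim \ref{Claim dis for ss}. By Theorem \ref{Prop of EFD} part (4), for $P$-almost every $\nu$ and $\pi\nu$-almost every $x$ the conditional measure $\nu_{\pi^{-1}(x)}$ generates $P_{\pi^{-1}(0)}$; since by part (1) every EFD has a common almost-sure dimension, the dimension of $P_{\pi^{-1}(0)}$-typical measures equals $\dim \nu_{\pi^{-1}(x)}$. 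So it suffices to compute this dimension and show it is positive.

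To this end I would invoke Theorem \ref{Prop of EFD} again. As $P$ is generated by $\mu$, part (1) gives that $P$-almost every $\nu$ is exact dimensional with $\dim \nu = \dim \mu$. Part (2), applied to $P$ and the orthogonal projection $\pi$, gives that for $P$-almost every $\nu$ and $\pi\nu$-almost every $x$,
\begin{equation*}
\dim \nu_{\pi^{-1}(x)} = \dim \nu - \dim \pi\nu = \dim \mu - \dim \pi\nu .
\end{equation*}
Thus everything reduces to controlling $\dim \pi\nu$ for $P$-typical $\nu$: I must show $\dim \pi \nu < \dim \mu$.

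The crux is the identity $\dim \pi\nu = \dim \pi\mu$ for $P$-almost every $\nu$. I would obtain it from the fact that the image of a uniformly scaling measure under a linear map is again uniformly scaling and generates the pushforward distribution (the projection compatibility underlying the theory of \cite{hochman2010dynamics} and \cite{hochmanshmerkin2015}; compare the repeated use of (\cite{hochman2010dynamics}, Proposition 1.9) elsewhere in this paper). Concretely, $\pi\mu$ generates the pushforward EFD $\pi_* P$, whose typical measures are precisely the $\pi\nu$ with $\nu$ being $P$-typical; applying Theorem \ref{Prop of EFD} part (1) to $\pi_* P$, which is generated by the exact dimensional measure $\pi\mu$, yields $\dim \pi\nu = \dim \pi\mu$ almost surely. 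Combining this with the displayed identity gives
\begin{equation*}
\dim \nu_{\pi^{-1}(x)} = \dim \mu - \dim \pi\mu ,
\end{equation*}
which is strictly positive: dimension conservation \eqref{Eq dim conservation} holds for $\mu$ by \cite{furstenberg2008ergodic}, and together with the standing hypothesis \eqref{Equation condition} that $\dim \pi\mu < \dim \mu$ it forces $\dim \mu - \dim \pi\mu > 0$. Hence $P_{\pi^{-1}(0)}$-typical measures have positive dimension, and $P_{\pi^{-1}(0)}$ is non-trivial. I expect the identity $\dim \pi\nu = \dim \pi\mu$ to be the main obstacle, as it is the only place where one passes from the fixed measure $\mu$ to its $P$-typical microsets and relies on projecting commuting with generating a fractal distribution; note that in the regime $\dim \mu > 1$ this step can be bypassed, since then $\dim \pi\nu \le 1 < \dim \mu$ automatically, so the work is only genuinely needed when $\dim \mu \le 1$.
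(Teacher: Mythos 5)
Your proof follows the same skeleton as the paper's (reduce to positive dimension of $P_{\pi^{-1}(0)}$-typical measures, then use exact dimensionality and dimension conservation from Theorem \ref{Prop of EFD} parts (1) and (2) for $P$-typical $\nu$), but the step you yourself identify as the crux is wrong as stated. The identity $\dim \pi\nu = \dim \pi\mu$, justified by the assertion that ``$\pi\mu$ generates the pushforward EFD $\pi_* P$, whose typical measures are precisely the $\pi\nu$,'' is not a valid application of the projection-compatibility results: statements of the type (\cite{hochman2010dynamics}, Proposition 1.9) concern images under \emph{invertible} affine maps (this is how the paper uses them -- e.g.\ for $P_2$ restricted to a measure supported on a single non-vertical line), not genuine non-injective projections of planar measures. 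For a non-injective $\pi$, pushing forward does not commute with the scenery flow: the restriction of $\pi\mu$ to a small interval around $\pi x$ accumulates mass from parts of $\mu$ far from $x$ that happen to project nearby, so the sceneries of $\pi\mu$ at $\pi x$ need not be projections of the sceneries of $\mu$ at $x$. What the theory actually provides for a \emph{fixed} projection is only a one-sided bound, namely (\cite{hochman2010dynamics}, Theorem 1.23): $\dim \pi\mu \geq \int \dim (\pi\eta)\, dP(\eta)$, which can be strict; equality is only guaranteed for almost every direction, whereas your $\pi$ is the specific exceptional projection singled out by \eqref{Equation condition} -- indeed, a projection exhibiting a dimension drop is exactly the kind of direction where one should be most suspicious of such an equality.

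Fortunately, the inequality goes in the direction your argument needs, so the proof is repairable, and the repair is precisely the paper's proof: since $P$ is $S$-ergodic, $\dim \pi\eta$ is $P$-a.s.\ constant, so Theorem 1.23 of \cite{hochman2010dynamics} gives $\dim \pi\nu \leq \dim \pi\mu$ for $P$-a.e.\ $\nu$; combined with $\dim \nu_{\pi^{-1}(x)} = \dim \mu - \dim \pi\nu$ this yields $\dim \nu_{\pi^{-1}(x)} \geq \dim \mu - \dim \pi\mu > 0$, using only hypothesis \eqref{Equation condition}. (Two smaller remarks: your appeal to Furstenberg's dimension conservation \eqref{Eq dim conservation} for $\mu$ itself is redundant here, since $\dim \pi\mu < \dim\mu$ alone gives $\dim\mu - \dim\pi\mu > 0$, and conservation at the level of $P$-typical measures is already supplied by Theorem \ref{Prop of EFD} part (2). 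Your observation that the regime $\dim\mu > 1$ bypasses the problematic step, because then $\dim\pi\nu \leq 1 < \dim\mu$ automatically, is correct -- but Theorem \ref{Theorem inv} does not assume $\dim\mu > 1$, so the fixed-direction inequality is genuinely needed.)
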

\begin{proof}
It suffices to show that $P_{\pi^{-1}(0)}$ typical measures have positive dimension. Recall that we are assuming $\dim \pi \mu < \dim \mu$ for our original measure $\mu$. Now, by Theorem \ref{Prop of EFD} parts (1) and (2) we find that
\begin{equation} \label{Eq dim cons FD}
\dim \pi \nu + \dim \nu_{\pi^{-1} (x)} = \dim \nu = \dim \mu,
\end{equation}
for $P$ almost every $\nu$ and $\nu$ almost every $x$. Now, by (\cite{hochman2010dynamics}, Theorem 1.23), for $P$ almost every $\nu$,
\begin{equation} \label{dim drop}
 \dim \pi \mu \geq \int \dim (\pi \eta) dP(\eta) = \dim \pi \nu
\end{equation}
where the last equality is due to $P$ being $S$-ergodic. Thus, a dimension drop for $\pi \mu$ implies a dimension drop of $\pi \nu$ for almost every $\nu$. Therefore,  \eqref{Eq dim cons FD} implies that $P$ almost surely, the conditional measures according to $\pi$ almost surely have positive dimension. Since the dimension of $P_{\pi^{-1}(0)}$ typical measures is equal to the dimension of typical $\pi$-conditional measures of $P$ typical measures, we are done.
\end{proof}

\subsection{Proof of Theorem \ref{Theorem inv}}

\textbf{Proof of Theorem \ref{Theorem inv} under a spectral assumption} Let us first assume that the measure $\mu$ generates an EFD $P$ such that for every integer $k\neq 0$, $\frac{k}{\log m} \notin \Sigma(P,S)$. Recall the family of distributions $\lbrace P_x \rbrace_{x\in \supp (\pi \mu)}$ from  Claim \ref{Claim dis}. Then for $\pi \mu$ almost every $x$, $P_x$ almost every $\nu$  generates the EFD $P_{\pi^{-1}(0)}$.  Now, the  EFD $P_{\pi^{-1}(0)}$ is a factor of $P$, so its pure point spectrum is contained in that of $P$. Thus,  for every integer $k\neq 0$, $\frac{k}{\log m}\notin \Sigma(P_{\pi^{-1}(0)},S)$.  Also, by Claim \ref{Claim proj inv}, the EFD $P_{\pi^{-1}(0)}$ is not trivial.  Finally, the measure $P_2 \mu$ is  pointwise generic under $T_n$ for either $P_2 \mu$ (if $n=p$ by the ergodic Theorem) or for $\lambda$ (if $n\not \sim p$ by\footnote{Notice that $\dim P_2 \mu >0$.  Indeed,  $\dim \mu_{\pi^{-1} (x)} =\dim \mu-\dim \pi \mu$ almost surely, and $P_2 \mu = \int P_2 \mu_{\pi^{-1} (x)} d\pi \mu(x)$ so $$\dim P_2 \mu \geq \text{ess-inf}_{x\sim \pi \mu} \dim P_2 \mu_{\pi^{-1} (x)}=\dim \mu-\dim \pi \mu>0$$} (\cite{hochmanshmerkin2015}, Theorem 1.10)). Therefore, the same is true for $P_2 \mu_{\pi^{-1} (x)}$ almost surely. Thus, the same is true for $P_2 (\nu)$ for $\pi \mu$ almost every $x$ and  $P_x$ almost every $\nu$.

Therefore, for $\pi \mu$ almost every $x$,  $P_x$ almost every measure $\nu$   meets the conditions of Theorem \ref{Theorem lines}. Thus,  the measure $\nu$ is pointwise generic under $T_m \times T_n$ for either $\lambda \times P_2 \mu$ or $\lambda \times \lambda$ (depending on the nature of $n$). Finally, by Claim \ref{Claim dis} part (2),
\begin{equation} \label{Eq dis for ori}
\mu = \int \mu_{\pi^{-1} (x)} d \pi \mu (x) = \int \left( \int \nu  dP_x(\nu) \right) d \pi \mu (x)
\end{equation}
so $\mu$ itself is pointwise generic under $T_m \times T_n$ for either $\lambda \times P_2 \mu$ or $\lambda \times \lambda$, depending on the nature of $n$.

\textbf{Proof of Theorem \ref{Theorem inv} without a spectral assumption} We remain with the case when the spectral condition on $P$ is not met: there exists some $k\in \mathbb{Z}\setminus \lbrace 0 \rbrace$ such that $\frac{k}{\log m}\in \Sigma(P,S)$. Then   $P$ is not $S_{\log m}$ ergodic by  (\cite{hochmanshmerkin2015}, Proposition 4.1). Thus, $P_{\pi^{-1} (0)}$ is not necessarily $S_{\log m}$ ergodic. By the previous case's proof, we may assume that indeed it is not ergodic.

Let $Q:= P_{\pi^{-1} (0)}$ and let $Q = \int Q_\eta d Q (\eta)$ denote the ergodic decomposition of $Q$ with respect to the action of $S_{\log m}$. So, a $Q$ typical measure $\eta$ generates $Q$, and $S_{\log m}$ generates its $S_{\log m}$ ergodic component $Q_\eta$ (this means that for $\eta$ typical $y$, the sequence $\lbrace \eta_{y, k\log m}\rbrace_k$ equidistributes for $Q_\eta$), which follows from from (\cite{hochmanshmerkin2015}, Lemma 4.2). As in the previous case, we aim to prove that:

\begin{Claim} \label{Claim spectral}
For $\pi \mu$ almost every $x$, $P_x$ almost every $\nu$ is pointwise generic under $T_m \times T_n$ for $\lambda \times P_2 \mu$ if $n=p$ and for $\lambda \times \lambda$ if $m\neq n$ and $n\not \sim p$.
\end{Claim}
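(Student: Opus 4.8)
The plan is to run the spectral-case argument fiberwise over the $S_{\log m}$-ergodic decomposition $Q=\int Q_\eta\, dQ(\eta)$, using that the obstructing frequencies collapse to the trivial eigenvalue on each component. Recall from Claim \ref{Claim dis} that for $\pi\mu$ almost every $x$, $P_x$ almost every $\nu$ generates $Q=P_{\pi^{-1}(0)}$ under the full scaling flow $S$, and that each such $\nu$ is supported on the fiber $\pi^{-1}(x)$, a line not parallel to the principal axes since $\pi\neq P_1,P_2$. First I would pass from full-flow generation to the discrete scenery: invoking the relation between $S$ and its time-$\log m$ map together with the $S$-quasi-Palm property (Theorem \ref{Theorem 4.7}), and following (\cite{hochmanshmerkin2015}, Lemma 4.2), I would show that for $P_x$ almost every $\nu$ and $\nu$ almost every base point $y$, the discrete scenery $\lbrace \nu_{y,k\log m}\rbrace_k$ equidistributes for a single $S_{\log m}$-ergodic component $Q_{\eta}=Q_{\eta(\nu,y)}$. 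Since $\dim$ is an $S_{\log m}$-invariant function equal to $\dim Q>0$ almost surely (Claim \ref{Claim proj inv}), each $Q_\eta$ is non-trivial.

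The crucial gain is that the spectral obstruction disappears on each component. Any flow eigenfunction $\phi$ of $Q$ at a frequency $\frac{k}{\log m}$ with $k\neq 0$ satisfies $\phi\circ S_{\log m}=e(k)\phi=\phi$, so it is $S_{\log m}$-invariant and hence constant $Q_\eta$-almost surely by the ergodicity of $Q_\eta$ under $S_{\log m}$. Thus the only frequencies that obstruct $T_m$-equidistribution to $\lambda$ in Theorem \ref{Conjecture pert} (its hypothesis (2)) have been rendered trivial. I would therefore establish the $S_{\log m}$-ergodic analogue of Theorem \ref{Theorem lines}: a measure on a line whose discrete scenery generates a non-trivial $S_{\log m}$-ergodic distribution, and whose second coordinate is pointwise $T_n$-generic for a continuous ergodic $\rho$, is pointwise generic under $T_m\times T_n$ for $\lambda\times\rho$. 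This is the natural discrete refinement of Theorem \ref{Conjecture pert} in which hypothesis (2) is automatic; I would obtain it by re-running the proof of Theorem \ref{Conjecture pert} with the time-$\log m$ map in place of the flow.

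It then remains to feed each $\nu$ into this refinement and integrate. The second-coordinate hypothesis is inherited exactly as in the spectral case: $P_2\nu$ is pointwise $T_n$-generic for $P_2\mu$ when $n=p$ (Birkhoff, using $\dim P_2\mu>0$ so that $P_2\mu$ is continuous) and for $\lambda$ when $n\not\sim p$ (by \cite{hochmanshmerkin2015}, Theorem 1.10). Applying the discrete refinement to the component $Q_{\eta(\nu,y)}$ generated by $\nu$ then gives that $P_x$ almost every $\nu$ is pointwise generic under $T_m\times T_n$ for $\lambda\times P_2\mu$ (resp.\ $\lambda\times\lambda$), which is Claim \ref{Claim spectral}; the final integration over $P_x$ and $\pi\mu$ is routine via the disintegration identities of Claim \ref{Claim dis}.

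The main obstacle I anticipate is the discrete-time equidistribution engine and its interface with the decomposition. Adapting Theorem \ref{Conjecture pert} from the flow $S$ to its time-$\log m$ map requires revisiting the whole scenery/CP-chain argument and checking that the limit of the first-coordinate empirical measures is genuinely $\lambda$ for every ergodic component, i.e.\ that no obstruction survives beyond the integer multiples of $\frac{1}{\log m}$ that the $S_{\log m}$-decomposition removes. Equally delicate is justifying that the discrete scenery of a $P_x$-typical $\nu$ equidistributes for one ergodic component rather than merely averaging to $Q$ along the full flow, and tracking the dependence $\eta=\eta(\nu,y)$ measurably so that the fiberwise conclusions can be integrated back up.
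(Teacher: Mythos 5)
Your reduction collapses at its central step: the ``$S_{\log m}$-ergodic analogue of Theorem \ref{Theorem lines}'' in which hypothesis (2) becomes automatic is false, and no re-running of the proof of Theorem \ref{Conjecture pert} with the time-$\log m$ map can deliver it. The spectral hypothesis there is not an artifact of working with the continuous flow; it encodes a real geometric obstruction, namely that the scenery of the measure can lock in phase with the base-$m$ grid. Concretely, let $\mu$ be the middle-thirds Cantor measure embedded on the diagonal and take $m=3$, $n=2$. At $\mu$-a.e.\ point the discrete scenery $\lbrace \mu_{y,k\log 3}\rbrace_k$ equidistributes for a single non-trivial $S_{\log 3}$-ergodic distribution (this is exactly the self-similar structure behind Claim \ref{Claim gen ss}: the full-flow EFD of the Cantor measure is the $\log 3$-suspension of this discrete distribution, and its spectrum contains $\mathbb{Z}\cdot\frac{1}{\log 3}$), and $P_2\mu$ is pointwise $T_2$-generic for $\lambda$ by \cite{hochmanshmerkin2015}. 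So all hypotheses of your proposed refinement hold, yet no point of the Cantor set is normal in base $3$, and the claimed conclusion $\frac{1}{N}\sum_{i=0}^{N-1}\delta_{(T_3^i x, T_2^i y)}\to\lambda\times\lambda$ fails at every point. Your observation that a flow eigenfunction at frequency $k/\log m$ restricts to a constant on each $S_{\log m}$-ergodic component is correct but beside the point: the obstruction is not an eigenfunction living inside one component, it is how the component itself sits relative to the base-$m$ grid, and ergodicity of the component cannot see this. The difficulty you flag in your final paragraph is therefore fatal, not technical.

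This is precisely why the paper's proof of Claim \ref{Claim spectral} takes a different route and does \emph{not} apply any Theorem \ref{Theorem lines}-type statement componentwise. Instead it shows directly that every subsequential limit $\alpha$ of the $T_m\times T_n$ orbit averages of a typical point must equal $\lambda\times P_2\mu$ (resp.\ $\lambda\times\lambda$), using three ingredients your proposal does not have: (i) an integral representation of the conditionals $P_1\alpha_{P_2^{-1}(y)}$ in terms of the ergodic component $Q_{\eta(\nu,t)}$ (\cite{algom2019simultaneous}, Claim 4.1) --- note the component is indexed by a Lebesgue-typical phase $t$, via the sceneries $\lbrace \nu_{y,k\log m + t}\rbrace_k$, not only by $(\nu,y)$; (ii) the convolution estimate \eqref{Equation T}, an analogue of (\cite{hochmanshmerkin2015}, Lemma 8.3), which exploits the fact that $Q$ is an EFD of dimension $\delta>0$ (Claim \ref{Claim proj inv}) to show $\dim\tau*\rho=1$ for $Q_{\eta(\nu,t)}$-typical $\rho$ and every $\tau$ in the countable family $\Theta$ furnished by \cite{Elon1999conv}; and (iii) the rigidity statement (\cite{algom2019simultaneous}, Claim 3.4), which upgrades ``$\dim \tau*(P_1\alpha_{P_2^{-1}(y)})=1$ a.s.\ for all $\tau\in\Theta$'' together with $P_2\alpha=P_2\mu$ to $\alpha=\lambda\times P_2\mu$. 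It is this dimension-of-convolutions mechanism, not the ergodicity of the components, that neutralizes the frequencies $k/\log m$ surviving in $\Sigma(P,S)$; without it, the Cantor-type phase-locking above remains possible and your argument cannot conclude.
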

Assuming Claim \ref{Claim spectral} is true, the result follows from  \eqref{Eq dis for ori}. The proof of Claim \ref{Claim spectral} is  similar to the proof of (\cite{algom2019simultaneous}, Theorem 1.1), given in (\cite{algom2019simultaneous}, Section 5), with minor modifications.   Thus, we only sketch the details:
 By Claim \ref{Claim proj inv}  the dimension of $Q$ typical measures is positive almost surely, and is constant almost surely by Theorem \ref{Prop of EFD}. Let $\delta>0$ be this almost surely value.   Now, a $Q$ typical measure $\kappa$ $S_{\log m}$ generates an ergodic component $Q_{\eta(\kappa)}$. Thus, for $\pi \mu$ typical $x$, $P_x$ typical $\nu$, for $\nu$ typical $y$, and for $\lambda$ typical $t$,  the sequence $\lbrace \nu_{y, k\log m+t}\rbrace_k$ also equidistributes for a typical ergodic component $Q_{\eta(\nu,t)}$ (this follows from the proof of the first part of Claim \ref{Claim dis}). Moreover, almost every ergodic component of $Q$ arises this way.

By an analogue of (\cite{hochmanshmerkin2015}, Lemma 8.3) for every measure $\tau\in \mathcal{P}([0,1])$ such that $\dim \tau \geq 1- \delta$,  we have for almost every $t$,
\begin{equation} \label{Equation T}
1=  \dim \tau * \rho,\quad \text{ for } Q_{\eta(\nu,t)} \text{ almost every } \rho.
\end{equation} 
For this to work, we notice that integrating $\dim \tau * \rho$ against 
$$dQ_{\eta(\nu,t)} (\rho)dt dP_x (\nu) d\pi \mu (x)$$
 is the same as integrating it against $dQ_\eta (\rho) dQ(\eta)= dQ$, which is an EFD and $Q$ typical measures have dimension $\delta$. Therefore, the result follows as in (\cite{hochmanshmerkin2015}, Lemma 5.8). Now, let 
 \begin{equation*}
\Theta = \left\{
  \beta^{*k} \;\middle|\;
  \begin{aligned}
  & \beta \sim \sum_{i=1} ^\infty \frac{X_i}{m^{qi}} \text{ where  } q\in \mathbb{N}  \text{ and } \lbrace X_k \rbrace  \text{ forms an IID sequence such that }  \\
  & \mathbb{P} (X_1=0)=\frac{1}{3}, \mathbb{P} (X_1=1)=\frac{2}{3}, \text{ for any } k\in \mathbb{N} \text{ satisfying } \dim \beta^{*k} > 1-\delta
  \end{aligned}
\right\}
\end{equation*}
  where $\beta^{*k}$ stands for the self-convolution of $\beta$ with itself $k$ times. The set $\Theta$ is countable and not empty by (\cite{Elon1999conv}, Theorem 1.1).

Finally, for a $\pi \mu$ typical $x$,  fix a $P_x$ typical measure $\nu$  such that: for some $t$, $Q_{\eta(\nu,t)}$ is a typical ergodic component with respect to \eqref{Equation T}, for every $\tau \in \Theta$.  Let $z$ be a $\nu$ typical point, and  let $\alpha$ be a measure such that $z$ equidistributes for it sub-sequentially under $T_m\times T_n$. We want to show that $\alpha = \lambda \times P_2 \mu$ if $n=p$ or $\alpha = \lambda \times \lambda$ if $m>n\not \sim p$. We assume without the loss of generality that $n= p$. By the decomposition in Claim \ref{Claim dis} part (2), we may assume $P_2 \alpha = P_2 \mu$, since  $P_2\mu$ is pointwise generic for $P_2 \mu$ by the ergodic Theorem, and therefore a.s.  $P_2 \mu_{\pi^{-1}(x)}$ is pointwise generic for $P_2 \mu$. Since for $\nu$ almost every $y$, $\lbrace \nu_{y, k\log m+t}\rbrace_k$ equidistributes for $Q_{\eta(\nu,t)}$,  we obtain an integral representation of $P_2$-conditional measures of $\alpha$ as in\footnote{While Claim 4.1 in \cite{algom2019simultaneous} requires that $\lbrace \nu_{y, k\log m+t}\rbrace_k$ equidistributes for a typical ergodic component $Q_\eta$ for $t=0$, an almost identical proof also  yields the case $t\neq0$.} (\cite{algom2019simultaneous}, Claim 4.1) with respect to $Q_{\eta(\nu,t)}$.   By the choice of $Q_{\eta(\nu,t)}$,  for every $\tau \in \Theta$ and   for $P_2 \mu$ almost every $y$, $\dim \tau*(P_1\alpha_{P_2 ^{-1}(y)}) =1$ (by the proof of Theorem 1.2 in \cite{algom2019simultaneous}). Finally, by\footnote{Notice that formally, Claim 3.4 in \cite{algom2019simultaneous} requires $\dim \tau*(P_1\alpha_{P_2 ^{-1}(y)}) =1$ to hold a.s. for every $\tau \in \mathcal{P}([0,1])$ with $\dim \tau \geq 1- \delta$. However, for the proof to work we only really need this to hold for every measure in the countable family $\Theta$.} (\cite{algom2019simultaneous}, Claim 3.4) we find that  $\alpha = \lambda \times P_2\mu$. This concludes the proof of Claim \ref{Claim spectral}.

\bibliography{bib}{}
\bibliographystyle{plain}

\end{document}